
\def\ZZ         {{\mathbb Z}}

\def\CC         {{\mathbb C}}

\def\QQ         {{\mathbb Q}}
\def\PP         {{\mathbb P}}
\def\NN         {{\mathbb N}}
\def\ZZ         {{\mathbb Z}}
\def\X          {{\cal X}}

\def\F          {{\cal F}}
\def\G          {{\cal G}}
\def\H         {{\cal H}}

\def\L          {{\cal L}}

\def\O          {{\cal O}}

\def\V           {{\cal V}}
\def\ii         {{\rm i}}
\def\ee         {{\rm e}}

\def\Char       {{\rm Char}}
\def\Ker        {{\rm {Ker}}}

\def\dim        {{\rm dim}}

\def\Ell        {{\cal ELL}}

\def\cal        {\mathcal}

\documentclass{amsart}
\usepackage{amssymb}
\usepackage{verbatim}
\usepackage{eucal}
\usepackage{eufrak}
\newtheorem{theorem}{Theorem}[section]
\newtheorem{lemma}[theorem]{Lemma}
\newtheorem{prop}[theorem]{Proposition}
\newtheorem{corollary}[theorem]{Corollary}

\newtheorem{dfn}[theorem]{Definition}

\theoremstyle{remark}
\newtheorem{remark}[theorem]{Remark}

\newtheorem{example}[theorem]{Example}

\title{Elliptic genus of phases of $N=2$ theories}

\author{Anatoly Libgober}
\address{Department of Mathematics\\
University of Illinois\\
Chicago, IL 60607}
\email{libgober@math.uic.edu}

\thanks{Author supported by a grant from Simons Foundation}
\begin{document}
\begin{abstract} We discuss an algebro-geometric description 
of Witten's phases of N=2 theories and propose a definition 
of their elliptic genus provided some conditions on singularities
of the phases are met. For Landau-Ginzburg phase
one recovers elliptic genus of LG models proposed in physics literature
in early 90s.  
For certain transitions between phases  we derive invariance 
of elliptic genus from an 
equivariant form of McKay correspondence for elliptic genus. 
As special cases one obtains Landau-Giznburg/Calabi-Yau correspondence
for elliptic genus of weighted homogeneous potentials  
as well as certain hybrid/CY correspondences. 
\end{abstract}

\maketitle

\section{Introduction}

Elliptic genera appeared in mathematics and physics literature 
in the middle 80s as invariants associated 
with manifolds (cf. \cite{landweber}). Around the same time it was realized 
that elliptic genus can be associated with superconformal 
field theories which depend on rather different type of data 
e.g.minimal models, Landau-Ginzburg models etc. (cf. \cite{wittenlg},
\cite{kawai}). In \cite{n=2}, Witten proposed geometric 
procedure based on use of
variations of symplectic quotients for specific actions of Lie groups, 
which relates Landau Ginzburg models 
to the sigma-models
and hence to the manifolds. In fact Witten's construction lead not just to 
Landau-Ginzburg or sigma models but to a host of others, which 
he called {\it phases of $N=2$ theories}, and which, besides just mentioned
types, include hybrid models, gauged Landau Ginzburg models etc.
The purpose of this note is to associate elliptic genus to 
(a generalization of) Witten's phases of $N=2$ theories.

The key constraint for such definition it seems 
should be invariance of elliptic 
genus in transitions between $N=2$ supersymmetric phases. The cases when 
a relation between Landau-Ginzburg and sigma-models are involved 
called Landau-Ginzburg/Calabi Yau correspondence. 
Study of such correspondence in the various contexts
was the subject of enormous number of beautiful works 
over last 20 years of which 
we shall mention only a few.
In the context of A-and B-models LG/CY correspondence was considered
by Chiodo, Iritani and Ruan  (cf. \cite{chiodo} and further references there)
in particular yields relation between Gromov-Witten and FJRW's theories.
In the context of homological mirror symmetry, the LG/CY correspondence
was obtained by Orlov (cf. \cite{orlov})  
based on Kontsevich definition of categories 
associated with singularities and was recently substantially extended 
by Ballard, Favero and Katzarkov (cf. \cite{katzarkov})
(cf. also \cite{halpern}).
See a review and discussion of some of these 
and numerous other recent developments in 
\cite{katzarkovpr}, \cite{katzarkov1}.
In the case of elliptic genus LG/CY correspondence 
for homogeneous polynomials was obtained by 
Gorbounov and Malikov (in fact this work deals 
with the vertex algebras associated with LG/CY data,
cf. \cite{GM}, also \cite{gorboch}). Extension of LG/CY 
correspondence for vertex algebras of (0,2)-toric models 
was considered in \cite{borisovkaufmann}.

Our generalization of Witten phases are Geometric Invariant Theory  (GIT)
quotients of the total spaces of a $G$-equivariant line bundle 
($G$ is a reductive group) on a quasi-projective manifold (cf. 
Def. \ref{phasesdef}). 
Fiberwise $\CC^*$-action on the total space of such bundles,
can be used to define the action on the GIT quotient
(cf. Prop. \ref{actionexistence}).
We work under assumptions that singularities of  
GIT quotients are  not too bad with precise conditions 
discussed in section \ref{actionssection}.
For each compact component of the fixed point set one 
has a contribution into equivariant elliptic genus 
discussed in section \ref{ellgensection}. 
This contribution is a holomorphic function 
on $\CC \times \CC \times \H$ where $\H$ is the upper half plane. 
It depends on the equivariant Chern classes of the 
neighborhood of this component in the GIT quotient
(cf. section \ref{ellgensection}). 
The elliptic genus of our phase is defined as the function $E(z,\tau)$
which is the restriction of the contribution of the fixed point set of 
the $\CC^*$-action into the equivariant elliptic genus of the phase
to $\Delta \times \H \subset \CC\times \CC \times \H$
where $\Delta$ is the diagonal of $\CC \times \CC$.
We show that such restriction yields, under 
certain Calabi Yau type conditions, 
a function on $\CC\times \H$ which is Jacobi form of expected weight 
and index (cf. Section \ref{modularitysec}).
In the case when the total space of the 
line bundle is $\CC \times \CC^n$
with the action of $\CC^*$ given by $\lambda(p,x_1,...,x_n)=
(\lambda^{-D}p,\lambda^{w_1}z_1,....,\lambda^{w_n}z_n)$ there are two GIT quotients
specified by a choice of linearization of this action (cf. example 
\ref{weightedaction}).
One of the GIT quotients is the quotient of $\CC^n$ 
by the action of the cyclic group 
and our elliptic genus $E(z,\tau)$ is just the elliptic genus of 
Landau Ginzburg model appearing on physics literature 
(cf. \cite{aharony},\cite{berglund}, \cite{kawai}).
Moreover, one readily identified the elliptic genus 
of the second GIT quotient with the orbifold elliptic 
genus of hypersurface in weighted projective space.
The identity 
between elliptic genera corresponding 
to different linearization 
represents a wall crossing phenomenon and can be derived 
in many cases from the equivariant version of McKay correspondence 
(cf. \cite{waelder})
for elliptic genus obtained in \cite{BLAnnals}.
In particular, the equality of elliptic genera 
corresponding to hybrid and other 
models lead to interesting new identities between Jacobi forms and
several explicite examples are given in the section \ref{maintheoremsection}.
Paper is concluded with description of possible firther developments.

The point of view on Witten phases described in this paper is close to the one 
taken in \cite{katzarkov} but the difference is 
that in the later work variation of GIT quotient occurs in the 
base of equivariant bundle while we consider variation 
GIT quotients of the total space of 
line $G$-bundles\footnote{the work \cite{katzarkov} uses differently the term 
{\it ``gauged 
Landau Ginzburg model''}. We use it (cf. definition 
\ref{phasesdef} (iii))
essentially
in the way it was used in Witten paper \cite{n=2}.} 
The piece of the data of LG model 
consisting of the section of $L$ in \cite{katzarkov}
in this note is replaced
by the action of the group $G$ on the total space of $L$.
The potential is a $G$-equivariant section 
of $L$, i.e. in the case of trivial bundle on $\CC^n$
(classical LG model) is essentailly a weighted homogeneous polynomial
(cf. Example \ref{weightedaction}). It 
does not play direct role in our definition of the elliptic genus 
of a phase. 

Finally, I thank Max Planck Institute for support of my visit where 
the work on this paper was concluded.
I also want to thank Nitu Kitchloo for useful comments on content of this 
paper (during conference on Modular Forms in Topology and Analysis 
where it was reported).

\section{GIT  quotients of total spaces of line bundles.}

\subsection{Linearizations of actions of reductive groups and corresponding GIT quotients.}

Let $X$ be a smooth quasi-projective variety acted upon by 
a reductive group $G$.
A linearization $\kappa$ of a line bundle 
$L$ (cf. \cite{MFK},\cite{dol}) is 
a fiberwise linear action of $G$ on the total space $\L$ of a line bundle 
$L$ commuting 
with projection of $\L$ on $X$.
Denote by $Pic^G(X)$ the  
group of linearized $G$-bundles. 
One has the exact sequence:
(cf. \cite{KKV}, \cite{dol}):
\begin{equation}\label{linearizations}
   0 \rightarrow H^1(G,\O(X)^*) \rightarrow Pic^G(X)
\rightarrow Pic(X)^G \subset Pic(X)
\end{equation}
where $Pic(X)^G$ is the group of $G$-invariant line bundles.

If $X$ is an affine space or is projective and $G$ 
is connected then 
\begin{equation}\label{algchar}
H^1(G,\O(X)^*)=\Char(G)
\end{equation}
With the data $X,G,L,\kappa,(L \in Pic^G(X))$ as above, 
one associates subsets of $X$, the semi-stable, 
stable and unstable 
loci, $X^{ss}_{\kappa},X^s_{\kappa},X^{us}_{\kappa}$ respectively, 
as follows (cf. \cite{MFK}):
\begin{equation}\label{defstability}
  X^{ss}_{\kappa}=\{x \in X \vert \exists m>0 \ {\rm and} \ s \in \Gamma(X,L^{\otimes m})^G, 
s(x) \ne 0, \ {\rm such} \ {\rm that} \  X_s \ {\rm is} \ {\rm affine} \}
\end{equation}
\begin{equation} \ \ \ \ \ \ \ \  X^s_{\kappa}=\{ x \in X \vert \ {\rm as} \ {\rm in} \ {(\ref{defstability})} \ 
{\rm and} \ G_x \ {\rm is} \ {\rm finite} \ {\rm  and} \ \overline {Gx}=Gx 
\ {\rm in} \ X_s \} 
\end{equation}
\begin{equation}
 \ \ \ \ \ \ \ \ \ \ \ \ \ X^{us}_{\kappa}=X\setminus X^{ss}_{\kappa}
\end{equation}
(here $X_s=\{y \in X \vert s(y)\ne 0\}$, $G_x$ is the stabilizer of $x$
and $\overline {Gx}$ is the closure of $G$-orbit of $x$). 
\begin{dfn}
GIT 
quotient $X//_{\kappa}G$ of $X$ corresponding to linearization $\kappa$ 
is the categorical $G$-quotient
of $X^{ss}_{\kappa}$ i.e. a morphism $p: X^{ss}_{\kappa} \rightarrow 
X^{ss}_{\kappa}//G$ such that 
for any $G$-compatible morphism $X^{ss}_{\kappa} \rightarrow Z$ there 
exist unique factorization
$X^{ss}_{\kappa} \rightarrow X^{ss}_{\kappa}//G \rightarrow Z$ (cf. \cite{dol}).
\end{dfn}

If $X^{ss}_{\kappa}=Spec A$ then $X^{ss}_{\kappa}//G=Spec A^G$ and 
for quasi-projective manifold $X$, the categorical quotient $X^{ss}_{\kappa}//G$
can be obtained via gluing quotients of affine subsets
(cf. \cite{dol},\cite{thad}).

Dependence of GIT quotient on linearization 
is as follows (cf. \cite{thad},\cite{dolhu}). 
Let $NS^G(X)$ be the $G$-linearized Neron-Severi group i.e. 
the quotient of the group 
$Pic^G(G)$ of $G$-linearizations by algebraic equivalence. 
Firstly, the 
subsets $X^{ss}_{\kappa},X^s_{\kappa}$ of $X$ 
depend only on the class of linearization $\kappa$
in $NS^G$ (cf. (2.1) in \cite{thad}).
It is convenient to view linearizations as 
elements in $NS^G\otimes \QQ$ (referred to as (classes) of 
fractional linearizations).  
An ample linearization $L$ is called $G$-effective if 
$L^n$ has a $G$-invariant section for some $n>0$.
Linearizations $\kappa$ which are $G$-effective 
and such that $X^{ss}_{\kappa} \ne \emptyset$ 
generate a cone $E^G_{\QQ}$ in
the preimage of the ample 
cone  $A_{\QQ} \subset NS(X)\otimes \QQ$ 
for the map $\nu: NS^G_{\QQ}(X) \rightarrow NS(X) \otimes \QQ$.

Secondly, one has the following key result:

\begin{theorem}\label{existencecone}
(cf.\cite{thad},\cite{dolhu})
\begin{enumerate}
\item  The cone $E_{\QQ}^G$ 
is polyhedral i.e. is an intersection in $NS^G_{\QQ}$ of $\nu^{-1}(A_{\QQ})$
with a polyhedron. It is a finite disjoint union of cones
$C_i$ (called chambers if $dim C_i$ is maximal and called cells 
in general) with the following 
property: $X^{ss}_{\kappa}$ is independent of a linearization $\kappa$
as long as $\kappa$ belongs to a fixed cell.

\item If $t \in [-\epsilon,+\epsilon] \subset E^G_{\QQ}$,
where $[-\epsilon,+\epsilon]$
 is viewed as 
a linear variation of lineraizations  such that 
$t=0$ correspond to a point belonging to a wall
(i.e. a codimension one cell), and $X^{ss}(t)$ is 
the semi-stable locus for the  linearization corresponding to $t$,
then there is inclusion $X^{ss}(\epsilon)\subset X^{ss}(0)$
inducing a projective morphism $X^{ss}(\epsilon) \rightarrow X^{ss}(0)$
which is birational if $X^s(0)\ne \emptyset$.
\end{enumerate}
\end{theorem}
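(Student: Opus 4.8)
The plan is to reduce the entire statement to the Hilbert--Mumford numerical criterion and then to read off the wall-and-chamber structure from the fact that Mumford's weight is linear in the linearization. Fix a $G$-equivariant projective compactification of $X$ (so that the criterion applies) and work relative to the affine quotient $\Spec\,\O(X)^{G}$. For a fractional linearization whose class is $L\in NS^{G}_{\QQ}(X)$ and a point $x\in X$, semistability of $x$ is governed by the numerical function $\mu^{L}(x,\lambda)$ attached to the one-parameter subgroups $\lambda\colon\CC^{*}\to G$, with $x\in X^{ss}_{\kappa}$ exactly when $\mu^{L}(x,\lambda)\ge 0$ for every such $\lambda$. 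The first point I would record is that $\mu^{L}(x,\lambda)$ is additive, hence linear, in the class of $L$ in $NS^{G}_{\QQ}$; therefore for fixed $x$ and $\lambda$ the set $\{\mu^{L}(x,\lambda)=0\}$ is a rational hyperplane, and $X^{ss}_{\kappa}$ can change only when $\kappa$ crosses one of these hyperplanes. Calling such a hyperplane a wall and a connected component of the complement of all walls inside $E^{G}_{\QQ}$ a cell, part (1) reduces to the finiteness of the set of walls that actually occur.

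The finiteness is where I expect the real difficulty to lie, and I would establish it by a boundedness argument in the spirit of \cite{thad},\cite{dolhu}. Although the pairs $(x,\lambda)$ form an infinite set, the value $\mu^{L}(x,\lambda)$ depends only on the weights of $\lambda$ on the fibres of $L$ over the components of the $\lambda$-fixed locus of the chosen compactification; since $X$ is of finite type and $G$ is reductive, these weight data range over a bounded family, so the resulting hyperplanes fall into finitely many classes. Granting this, $E^{G}_{\QQ}$ is the intersection of $\nu^{-1}(A_{\QQ})$ with the polyhedron cut out by the finitely many effectivity inequalities, hence is polyhedral, and the induced subdivision into the cells $C_{i}$ has, by construction, the property that $X^{ss}_{\kappa}$ is constant on each cell. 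This yields all of part (1).

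For part (2) I would write the variation as $L_{t}=L_{0}+t\,\ell$, with $t=0$ on a wall and $\ell$ the transverse direction. Passing from $t=0$ to $t=\epsilon$ gives a definite sign to the weights $\mu^{L_{0}}(x,\lambda)$ that vanished on the wall, so the semistability condition can only become more restrictive; this produces the inclusion $X^{ss}(\epsilon)\subset X^{ss}(0)$. Since it is an open immersion of $G$-invariant sets, the composite $X^{ss}(\epsilon)\hookrightarrow X^{ss}(0)\to X^{ss}(0)//G$ is $G$-invariant, and the universal property of the categorical quotient then factors it through $X^{ss}(\epsilon)//G$, giving the asserted morphism on GIT quotients induced by the inclusion.

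Finally I would check projectivity and birationality of this morphism. Projectivity follows because $L_{\epsilon}$ descends to a bundle that is relatively ample over $X^{ss}(0)//G$, exhibiting a neighbourhood of the target quotient as $\mathrm{Proj}$ of a finitely generated graded algebra over $X^{ss}(0)//G$. For birationality, if $X^{s}(0)\ne\emptyset$ then strict stability on the wall persists under a small perturbation, so $X^{s}(0)\subset X^{ss}(\epsilon)$; over the geometric quotient $X^{s}(0)//G$ the morphism is an isomorphism, and since $X^{s}(0)$ is dense in $X^{ss}(0)$ this forces the morphism to be birational, completing part (2).
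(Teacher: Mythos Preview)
The paper does not give its own proof of this theorem: it is stated with the attribution ``(cf.~\cite{thad},\cite{dolhu})'' and quoted as background, with no argument supplied. So there is nothing in the paper to compare your proposal against beyond the references themselves.

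That said, your sketch is essentially the line of argument found in those references. The reduction to the Hilbert--Mumford numerical function $\mu^{L}(x,\lambda)$ and its linearity in $L\in NS^{G}_{\QQ}$ is exactly how the wall-and-chamber structure is produced in \cite{dolhu}; the finiteness of walls is the genuinely delicate point, and you are right to flag it --- in \cite{dolhu} it is obtained via a boundedness argument showing only finitely many strata (equivalently, finitely many semistable loci) occur, while \cite{thad} argues more directly with the graded ring of invariants. Your treatment of part (2) --- the inclusion $X^{ss}(\epsilon)\subset X^{ss}(0)$ from the sign of the perturbed weights, the factorization through the categorical quotient, relative ampleness of the descended $L_{\epsilon}$ for projectivity, and persistence of strict stability under small perturbation for birationality when $X^{s}(0)\ne\emptyset$ --- matches the arguments in \cite{thad} and \cite{dolhu} closely. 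One small caution: the claim that $X^{s}(0)$ is dense in $X^{ss}(0)$ is not automatic in general and is not quite what is needed; what you actually use (and what the references prove) is that $X^{s}(0)\subset X^{s}(\epsilon)$, so the morphism is an isomorphism over the nonempty open $X^{s}(0)//G$, which already gives birationality without any density statement.
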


\subsection{Phases.}
The main object of this note is 
a class GIT quotients appearing in the following context.

\begin{dfn}\label{phasesdef}
Let $\L$ be the total space of a linearized line bundle $L$
over a smooth quasi-projective manifold $X$ with an action of 
a reductive connected group $G$.
A phase corresponding to quadruple $(X,G,\L,\kappa)$ where 
$\kappa$ is a  linearization 
of $G$-action on $\L$
is the GIT quotient of the total space of linearized 
bundle $\L$ on $X$
i.e. $\L^{ss}//G$.
\begin{enumerate}
\item 
 A phase is called 
Landau Ginzburg, if it is biholomorphic
to $\CC^N/H$ where $H$ a finite subgroup of a torus in $GL_N(\CC)$.
\item
 A phase is called {\it hybrid} if it is biholomorphic to 
an orbifold bundle over a projective orbifold 
with the fiber as in (1).
\item
 A phase $\L^{ss}//G$ is called {\it ``gauged Landau Ginzurg model''} if it is 
biholomorphic to GIT quotient  $X//H\times A$ for a subgroup 
$H\times A$ of $G$ and $A$ (resp. $H$) is a finite 
abelian (resp reductive) group.
\item A phase is called Calabi Yau if it is biholomorphic 
to the total space $\L$ of a line bundle $L$ over a smooth projective manifold 
$X$ such that $c_1(T(\L)\vert_X)=0$ (here $T(\L)$ is the tangent 
bundle of the space $\L$).

\end{enumerate}
\end{dfn}

The following is useful for explicite descriptions of phases
in the case when the bundle $L$ is trivial. 

\begin{prop}\label{gittheorem} Let $G$ be a reductive group acting on 
a smooth quasi-projective variety $X$ and let $\kappa,\psi \in Char G$ be two 
linearizations 
(cf.(\ref{linearizations})) of {\it trivial} line bundle on $X$.
Then the action of $G$ on $\L=X\times \CC$ has the form  
\begin{equation}\label{psiaction}
g(p,x)=(\psi(g)p,gx)
\end{equation}
for $\psi \in \Char(G)$.
As in (\ref{defstability}), let $(X\times \CC)_{\kappa}^{ss},
(X\times \CC)_{\kappa}^s,
(X\times \CC)^{us}_{\kappa}$
denote semi-stable, stable and unstable loci for $G$-action 
and the linearization $\kappa$ of the trivial bundle 
on $X\times \CC$.  

\begin{enumerate}
\item\label{firstitem}
\noindent If $0 \times X^{s}_{\kappa} \subset (X\times \CC)^{ss}_{\kappa}$ 
then $(X \times \CC)^s_{\kappa}/G$ is biholomorphic to a rank one 
orbibundle over the quotient $X^s_{\kappa}/G$.

\item\label{seconditem}
\noindent Suppose that $0 \times X \subseteq (X\times\CC)^{us}$.
Let $H=Ker \psi$, $H_0$ its connected component and $H/H_0$ 
a finite group of connected components of $H$.
If $(0 \times X) \subset (X\times \CC)^{us}$ then 
$(X \times \CC)//G$ is biholomorphic of a quotient of 
$X//H_0$ by action of $H/H_0$.
\end{enumerate}
\end{prop}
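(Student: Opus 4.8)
The plan is to translate both statements into properties of the ring of $G$-invariant sections on $\L=X\times\CC$, which can be read off from the decomposition of a section into homogeneous components in the fibre coordinate $p$. Concretely, a regular section of the $m$-th power of the $\kappa$-linearized trivial bundle on $X\times\CC$ is a regular function $s(p,x)$, and writing $s=\sum_{d\ge 0}s_d(x)p^d$ the invariance relation $s(\psi(g)p,gx)=\kappa(g)^m s(p,x)$ becomes, after comparing coefficients of $p^d$, the system $s_d(gx)=\kappa(g)^m\psi(g)^{-d}s_d(x)$. Thus $s$ is $G$-invariant if and only if each coefficient $s_d$ is a semi-invariant function on $X$ for the character $\kappa^m\psi^{-d}$. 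This single observation, together with the $G$-equivariant projection $\pi\colon X\times\CC\to X$, drives both parts.

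For part (\ref{firstitem}) I would first note that a $\kappa^m$-semi-invariant $s_0$ on $X$ with $s_0(x_0)\ne0$ and $X_{s_0}$ affine pulls back to the $G$-invariant section $\pi^*s_0=s_0(x)$ on $X\times\CC$, which is constant along the fibre, does not vanish on any point of $\pi^{-1}(x_0)$, and satisfies $(X\times\CC)_{\pi^*s_0}=X_{s_0}\times\CC$, again affine. Hence $\pi^{-1}(X^{ss}_\kappa)\subseteq(X\times\CC)^{ss}_\kappa$, so the hypothesis places the whole fibre $\CC$ over each stable point inside the semistable locus. A short argument using closedness of orbits (for $x\in X^s_\kappa$ the orbit $Gx$ is closed and $G_x$ is finite, so a limit of $(\psi(g_n)p,g_nx)$ again lies in the orbit of $(p,x)$) shows that every such $(p,x)$ is in fact stable; checking the reverse inclusion the same way gives $(X\times\CC)^s_\kappa=X^s_\kappa\times\CC$. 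The projection then descends to a morphism $q\colon(X\times\CC)^s_\kappa/G\to X^s_\kappa/G$, and I would finish by applying Luna's slice theorem at a stable point: the fibre of $q$ over $[x]$ is $\CC/G_x$ with $G_x$ acting through the finite subgroup $\psi(G_x)\subset\CC^*$, a rank one orbibundle fibre, and the slice provides the local triviality identifying $q$ with such an orbibundle.

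For part (\ref{seconditem}) the hypothesis $0\times X\subseteq(X\times\CC)^{us}$ forces every semistable point to have $p\ne0$, so $(X\times\CC)^{ss}_\kappa\subseteq X\times\CC^*$; correspondingly the coefficient $s_0$ plays no role and the quotient is governed by the $s_d$ with $d>0$. Since $G$ is connected, $\psi(G)$ is a connected subgroup of $\CC^*$, so in the nondegenerate case $\psi$ is surjective and one has the exact sequence $1\to H\to G\to\CC^*\to1$ with $H=\Ker\psi$ and third arrow $\psi$. Every $G$-orbit in $X\times\CC^*$ meets the slice $X\times\{1\}$ (choose $g$ with $\psi(g)=p^{-1}$), and two slice points are $G$-equivalent exactly when they are $H$-equivalent, since $g$ maps the slice to itself iff $\psi(g)=1$. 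On the slice the invariance conditions reduce, because $\psi|_H=1$, to $s_d(hx)=\kappa^m(h)s_d(x)$ for every $d$: each coefficient becomes a single $H$-semi-invariant for the restricted character $\kappa|_H$. Matching invariant rings this way identifies $(X\times\CC)//_\kappa G$ with $X//_{\kappa|_H}H$, and since $H_0$ is the connected (reductive) component of $H$ the latter is $(X//_{\kappa|_H}H_0)/(H/H_0)$, the asserted quotient of $X//H_0$ by the finite group $H/H_0$.

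The main obstacle in both parts is upgrading the bijection of orbits to an isomorphism of the GIT quotients as (orbifold) varieties. In part (\ref{firstitem}) one must extract the local triviality and the correct cyclic orbifold structure of the fibre from the Luna slices, rather than merely matching points. In part (\ref{seconditem}) one must verify that restriction to the slice genuinely induces an isomorphism of invariant rings, compatible with the affine charts cut out by the semi-invariants and not only a set-theoretic bijection, together with the bookkeeping needed to descend the $\kappa$-linearization to $\kappa|_H$ and to dispose of the degenerate case in which $\psi(G)$ is trivial.
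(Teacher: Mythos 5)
Your proposal is correct in outline and follows essentially the same route as the paper's (much terser) proof. For part (\ref{firstitem}) the paper notes that the projection $\pi\colon X\times\CC\to X$ is $G$-equivariant, asserts that it carries semistable points to semistable points, and then applies Lemma \ref{fibrationquotient}, which says that for an equivariant map $f\colon Y\to Z$ the fibre of the induced map $Y/G\to Z/G$ over the orbit of $z$ is $f^{-1}(z)/G_z$; this is exactly your fibre computation $\CC/G_x$ with $G_x$ acting through $\psi(G_x)\subset\CC^*$, except that you obtain local triviality from Luna's slice theorem, a point the paper leaves implicit in the word ``orbibundle''. For part (\ref{seconditem}) the paper writes only the chain $(X\times\CC)^{ss}/G=(X\times\CC^*)^{ss}/G=X^{ss}/H=(X//H_0)/(H/H_0)$; your slice $X\times\{1\}$ together with the coefficient decomposition $s=\sum_d s_d(x)p^d$, each $s_d$ semi-invariant for $\kappa^m\psi^{-d}$, is precisely the justification of the middle equality that the paper omits.

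One step in your write-up would fail as stated: you claim the reverse inclusion $(X\times\CC)^s_\kappa\subseteq X^s_\kappa\times\CC$ can be checked ``the same way'' as the forward one, but the two directions are not symmetric. Pulling back a $\kappa^m$-semi-invariant from $X$ gives the forward direction; in the reverse direction, stability of $(p,x)$ only produces, via your own decomposition, coefficients $s_d$ semi-invariant for the mixed characters $\kappa^m\psi^{-d}$, and for $d\ne0$ this says nothing about $\kappa$-semistability of $x$. Concretely, in Example \ref{lgmodelsigmamodel} with $k<0$ one has $X^{ss}_\kappa=\emptyset$ while every point with $p\ne0$ is stable in $X\times\CC$, so the asserted equality (and indeed the literal statement of part (\ref{firstitem})) fails when the hypothesis $0\times X^s_\kappa\subset(X\times\CC)^{ss}_\kappa$ holds only vacuously; a complete argument must use that hypothesis in an essential way to rule out stable points of $X\times\CC$ lying over non-semistable points of $X$. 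To be fair, the paper buries the identical difficulty in its unproved assertion that $\pi$ induces a map $(X\times\CC)^{ss}\to X^{ss}$ ``from (\ref{defstability})'', so your argument is no less rigorous than the original; but this is the one place where neither text provides what is needed, and your coefficient decomposition is the natural tool with which to close it.
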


\begin{proof} 
First note, that the fibers of maps of geometric 
quotients can be described as 
follows:
\begin{lemma}\label{fibrationquotient}
 Let $f: Y\rightarrow Z$ be a $G$ equivariant 
holomorphic map of quasi-projective varieties for which 
both geometric quotients $Y/G,Z/G$ exist. 
For $z \in Z$ let $G_z$ denotes the stabilizer of 
$z$. Then the fiber at the orbit of $z \in Z$ of 
induced map $f_G: Y/G \rightarrow Z/G$
of the spaces of $G$-orbits
is biholomorphic to $f^{-1}(z)/Stab(z)$.
\end{lemma}

Projection $\pi: X\times \CC \rightarrow X$ is $G$-equivariant 
and it follows from (\ref{defstability}) 
that $\pi$  induces the map $(X \times \CC)^{ss} \rightarrow X^{ss}$. 
Hence Prop.\ref{gittheorem} (\ref{firstitem}) is a consequence of lemma \ref{fibrationquotient}.

In the case Prop.\ref{gittheorem}(\ref{seconditem}) we have: 
\begin{equation}
(X \times \CC)^{ss}/G=
(X \times \CC^*)^{ss}/G=X^{ss}/H=(X//H_0)/(H/H_0)
\end{equation}
\end{proof}

\subsection{Examples of Phases.} (cf. \cite{n=2}).
\begin{example}\label{lgmodelsigmamodel}
{\it LG model 
and $\sigma$-model cf.\cite{n=2}}. Let $G=\CC^*$ acts on $\CC \times \CC^n$ with 
coordinates $(p,x_1,...,x_n,)$ via:
\begin{equation}\label{wittenaction}
     \lambda (p,x_1,...,x_n)=(\lambda^{-n}p,\lambda x_1,...,\lambda x_n)  
\end{equation}
(i.e. we consider the case of Prop. \ref{gittheorem}
when $X=\CC^n,G=\CC^*$ and $\psi(\lambda)=\lambda^{-n},
\lambda\in \CC^*$).  
Let $\H_p \subset \CC \times \CC^n$ denotes the hyperplane $p=0$.
For linearization $\kappa(\lambda)=\lambda^k, k<0$, 
one has $\H_p=(\CC \times \CC^n)^{us}$, the subgroup $H$ of $G=\CC^*$
from (\ref{gittheorem}) part (2)
is $\Ker \psi=\mu_n$ 
where $\mu_n$ is the group of roots of unity of degree $n$.
Therefore $\CC^n\times \CC//\CC^*=\CC^n/H=\CC^n/\mu_n$.

If $k>0$, then $\H_p\setminus (0,....,0) \subset (\CC \times \CC^n)^s$
(indeed, in this case we have ${\CC \times \CC^n}^{us}=
\{(p,x_1,...,x_n) \vert x_1=...=x_n=0\}$)
and from (\ref{gittheorem}) part (1), $(\CC \times \CC^n)//\CC^*$ 
is the total space of the line bundle $\O(-n)_{\PP^{n-1}}$ 
(cf. Corollary \ref{lgcy} below
for relation of invariants of 
this GIT to invariants of hypersurfaces in $\PP^{n-1}$ which explains the term 
$\sigma$-model phase).
\end{example}  

\begin{example}\label{weightedaction}
{\it LG models and weighted projective spaces.} 
Consider action of $G=\CC^*$ on $\CC \times \CC^n$ 
via
\begin{equation}\label{weightedactioneq}
\lambda \cdot(p,x_1,\cdots,x_n)=(\lambda^{-D}p,\cdots,\lambda^{w_i}x_i,...).
\end{equation}
i.e. $X=\CC^n,\psi(\lambda)=\lambda^{-D}$ in notations of 
Prop. \ref{gittheorem}.
There are two GIT quotients of $\CC^{n+1}$ with respect to 
the action (\ref{weightedactioneq})
corresponding to $\CC^*$-linearizations $\kappa(\lambda)=\lambda^k$ with 
$k<0$ and $k>0$.
The former is isomorphic to $\CC^n/Ker \psi=\CC^n/\mu_D$
where generator $J_W$ of $\mu_D$ acts as 
\begin{equation}\label{gradingop}
    J_W(\dots,x_i,\dots)=(\dots,e^{2 \pi \ii q_i}x_i,\dots)
\end{equation}
(i.e. via exponential grading operator; here $q_i={w_i \over D}$).
The later linearization yields the line bundle 
over the weighted projective space 
$\PP(w_1,\dots,w_n)$ with $c_1=-Dh$, where $h$ is the positive 
generator of 
$H^2(\PP(w_1,\dots,w_n),\ZZ)$.
\par Note that $\CC^n/\mu_D$ 
admits as a compactification the weighted projective space
$\PP(D,w_1,\dots,w_n)$.   
The latter is the quotient of 
$\PP(1,w_1,\dots,w_n)$ by the action  of $\mu_D$.
\end{example}

\begin{example}\label{hybridexample}
 {\it Hybrid model} (cf. \cite{n=2}
sect. 5.2)
{\it Product of projective spaces.}
Let $G=(\CC^*)^2$ acts on $\CC^{n+m+1}$ via
\begin{equation}(\lambda_1,\lambda_2)(p,x_1,\dots,x_n,y_1,\dots,y_m)=
(\psi(\lambda_1,\lambda_2)p,\lambda_1 x_1,\dots,\lambda_1 x_n,\lambda_2y_1,
\dots,\lambda_2y_m)
\end{equation}

where $\psi(\lambda_1,\lambda_2)=\lambda_1^{-n}\lambda_2^{-m}$.
Elements of $Pic^G(\CC^{n+m})=Char((\CC^*)^2)$ have the form 
$\kappa(\lambda_1,\lambda_2)=\lambda^{r_1}_1\lambda_2^{r_2}$.
Different choice of stability conditions lead to 

\begin{enumerate}
\item ($H_1$) For the cone $r_1<0,r_1m-r_2n<0$ (which correspond 
to the case when unstable locus is the union of $p=0$ and $y_1=....=y_m=0$)
one obtains the 
total space of the orbifold $\CC^n/\mu_n$ bundle over $\PP^{m-1}$, 

\item ($H_2$) For the cone $r_2<0,r_1m-r_2n>0$ (where 
the unstable locus is the union of $p=0$ and $x_1=...=x_n=0$),  
the total space of orbifold $\CC^m/\mu_m$ bundle over $\PP^{n-1}$,

\item ($H_3$) For the cone $r_1>0,r_2>0$ one obtains the
total space of line bundle over $\PP^{n-1} \times \PP^{m-1}$.

These GIT quotients are quotients of smooth quasi-projective 
varieties by finite abelian groups as follows:

$H_1$ (resp. $H_2$) is quotient of the total space of 
$\V_1=(\oplus \O_{\PP^{m-1}}(-m))^n$ (resp. $\V_2=(\oplus \O_{\PP^{n-1}}(-n))^m$) 
by the action 
of $\mu_n \subset (\CC^*)^n=Aut (\V_1)$ (resp $\mu_m$). 
Both support the diagonal action of $T=\CC^* \subset Aut(\V_i)$.
\end{enumerate}
\end{example}

\begin{example}\label{gaugedlg} {\it Gauged LG models}:
\begin{enumerate}
\item {\it Linearization on the wall.} Consider the case $m=n=2$ 
in Example \ref{hybridexample} (i) and linearizations satisfying $r_1=r_2$.
The unstable locus is $p=0$ and hence it follows from (\ref{gittheorem})
that $\CC^2 \times \CC^4//\CC^*=\CC^4//H$ where $H=\CC^*\times \mu_2$.
The action of connected component of identity $H_0\subset H$ 
on $\CC^4$ is given by 
$\lambda(x_1,x_2,y_1,y_2)=(\lambda x_1,\lambda x_2,\lambda^{-1}y_1,
\lambda^{-1}y_2)$ and the action of $\mu_2$ is 
$(\lambda x_1,\lambda x_2,y_1,y_2), \lambda \in \mu_2$.  
The GIT quotient by $H_0$ can be identified with the cone in $\CC^4$
given by $T_1T_4=T_2T_3$ where $T_1=x_1y_1,T_2=x_1y_2,T_3=x_2y_1,T_4=x_2y_2$
(cf. \cite{dol}, Ex.8.6). The action 
of $\mu_2$ on the cone is the diagonal action $\lambda (T_1,..,T_4)=
(\lambda T_1,....,\lambda T_4)$ and this phase 
is biholomorphic to $\mu_2$-quotient of the cone. Here
we have gauged LG model corresponding to $\mu_2$ with the gauge 
group $H_0=\CC^*$.

\item{\it Toric varieties.} Let $X$ be a projective toric variety corresponding 
to a simplicial fan. It has a GIT quotient presentation $\CC^N//{\CC^*}^k$
where $\rho: {\CC^*}^k \rightarrow (\CC^*)^N$ is a homomorphism
into the torus acting diagonally on $\CC^N$
(cf. \cite{cox} \cite{CLS}). For a fixed $\psi \in \Char(\CC^*)^k$ 
one has several phases 
of ${\CC^*}^k$-action 
of $\CC\times \CC^N$ given by $g(p,x)=(\psi(g)p,\rho(g)x),g \in (\CC^*)^k,
p\in \CC, x\in \CC^N$, 
one of which is a line bundle over $X$ (for $\kappa \in \Char \CC^k$ such that  
$\{(p,x) \vert p=0,x \in \CC^{N}\} \subset (\CC\times \CC^N)_{\kappa}^{ss}$ and the rest
are toric varieties which are GIT quotients of $\CC^N$ by the 
action of an algebraic group 
with identity component being the torus $(\CC^*)^{k-1}$
i.e. an example of gauge LG model in the sense of (3) Def. \ref{phasesdef}
(in some cases, as in example \ref{hybridexample}, this is also a 
hybrid model).

\item Let $Mat_{n,m}$ denote the vector space of $n \times m$ matrices 
(with entries in $\CC$). 
Consider the action of $GL_n$ on $(Mat_{n.m})\times \CC \ \ (n<m)$ 
via multiplication on the first factor and via the character 
$\psi(A)=det(A)^{-k}, k >0$ on the second. In this case,
 $Pic^{GL_n}=\ZZ$ and one has two GIT quotients
one of which is the line bundle over the Grassmanian $Gr(n,m)$ and 
another is the quotient of the affine cone of $Gr(n,m)$ 
 by the cyclic group
(i.e. this phase is the gauged (with gauge group $SL_n$) $\mu_m$-LG model).
For $k=m$ the first GIT quotient has trivial first Chern class
(recall that $c_1(Gr(n,m)=m\sigma$ where $\sigma$ is a positive
generator of $H^2(Gr(n,m),\ZZ)$ (cf. \cite{BH}, Sect. 16.2)   
\end{enumerate}
\end{example}

\section{$\CC^*$-action on GIT quotients}\label{actionssection}
Following Proposition gives a sufficient condition for a phase 
$\L//_{\kappa}G$ (cf. def. \ref{phasesdef}) to support 
the $\CC^*$-action induced by $\CC^*$ action on $\L$.

\begin{prop}\label{actionexistence} 
 Let $X,G,L,\L,\kappa$ as in Def. \ref{phasesdef} and such that 
$\kappa$ belongs to the interior of a GIT chamber i.e. 
the interior of a cone of codimension zero in $E^G_{\QQ}$ 
described in Theorem \ref{existencecone}.
Then the $\CC^*$-action on the fibers of projection $\pi: \L \rightarrow X$ 
induces the $\CC^*$-action on $\L//_{\kappa}G$.
\end{prop}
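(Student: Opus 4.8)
The plan is to analyze the fiberwise action, which I will denote $\rho_t\colon \L\to\L$ for $t\in\CC^*$, and to reduce the statement to two facts from which the descent to $\L//_{\kappa}G$ is purely formal: first, that $\rho_t$ commutes with the $G$-action, and second, that $\rho_t$ preserves the semistable locus $\L^{ss}_{\kappa}$. Granting these, $\rho_t$ permutes the $G$-orbits in $\L^{ss}_{\kappa}$, so composing with the quotient morphism $q\colon \L^{ss}_{\kappa}\to \L^{ss}_{\kappa}//G$ gives a $G$-invariant morphism $\CC^*\times\L^{ss}_{\kappa}\to \L^{ss}_{\kappa}//G$ (with $G$ acting trivially on the $\CC^*$ factor), which by the universal property of the categorical quotient factors uniquely through $\CC^*\times(\L^{ss}_{\kappa}//G)$ and thereby defines the induced $\CC^*$-action.

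Commutativity I would dispatch directly from the definition of a linearization. By hypothesis $G$ acts on $\L$ through maps $g\colon L_x\to L_{gx}$ that are linear on fibers and compatible with $\pi$, while $\rho_t$ scales each fiber $L_x$ by $t$; linearity of $g$ on fibers gives $g(\rho_t v)=t\,g(v)=\rho_t(gv)$ for $v\in L_x$. Hence $\CC^*$ and $G$ together generate a commuting action of $\CC^*\times G$ on $\L$.

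The main obstacle is the second point, since a priori $\rho_t$ might send a $\kappa$-semistable point to an unstable one. The key observation is that, because $\rho_t$ commutes with $G$, pulling back $\kappa$ along $\rho_t$ yields a new $G$-linearization $\rho_t^*\kappa$, and an identification of invariant sections shows $\L^{ss}_{\rho_t^*\kappa}=\rho_t^{-1}(\L^{ss}_{\kappa})$. Now $t\mapsto[\rho_t^*\kappa]$ is an algebraic map from the connected group $\CC^*$ into the finitely generated, hence discrete, group $NS^G$; it is therefore constant, equal to its value $[\kappa]$ at $t=1$. Since, as recalled just before Theorem \ref{existencecone}, the semistable locus depends only on the class of the linearization in $NS^G$, this forces $\L^{ss}_{\rho_t^*\kappa}=\L^{ss}_{\kappa}$, whence $\rho_t^{-1}(\L^{ss}_{\kappa})=\L^{ss}_{\kappa}$ for every $t$. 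Thus $\L^{ss}_{\kappa}$ is $\CC^*$-invariant, and the rigidity supplied by connectedness of $\CC^*$ acting on the discrete invariant $NS^G$ is exactly what makes the argument go through.

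Finally, the hypothesis that $\kappa$ lies in the interior of a chamber, i.e. a codimension-zero cell of $E^G_{\QQ}$, places us at an $NS^G$-class for which $\L^{ss}_{\kappa}=\L^{s}_{\kappa}$ by the results of Thaddeus and Dolgachev--Hu cited in Theorem \ref{existencecone}, so that $\L//_{\kappa}G$ is a geometric (orbifold) quotient; the commuting, semistability-preserving $\CC^*$-action then descends to a genuine algebraic $\CC^*$-action there. The only routine verifications I would still record are that $\rho_t^*\kappa$ is again an effective linearization of the same type so that all quotients in sight match up, both of which are automatic once $[\rho_t^*\kappa]=[\kappa]$ in $NS^G$.
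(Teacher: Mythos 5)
Your proposal is correct, but its key step is genuinely different from the paper's. The paper proves the stronger, fiberwise statement that $x\in\L^{us}_{\kappa}$ iff $\pi(x)\in X^{us}_{\kappa}$: it identifies invariant sections upstairs via the Leray spectral sequence and the eigensheaf decomposition $\Gamma(\L,\pi^*(K)^m)^G=\oplus_n\Gamma(X,K^m\otimes L^n)^G$, and then uses the chamber-interior hypothesis in an essential way, since for $m\gg 0$ the linearization $m\kappa+n\psi=m(\kappa+\frac{n}{m}\psi)$ stays in the chamber of $\kappa$, forcing each graded piece of an invariant section to vanish over an unstable point of $X$; invariance of $\L^{ss}_{\kappa}$ under the fiberwise $\CC^*$ then follows because it is a union of fibers of $\pi$. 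You instead obtain invariance by rigidity: $\rho_t$ is a $G$-equivariant automorphism, so $\L^{ss}_{\rho_t^*\kappa}=\rho_t^{-1}(\L^{ss}_{\kappa})$; the connected algebraic family $\{\rho_t^*\kappa\}_{t\in\CC^*}$ has constant class in $NS^G$, and semistability depends only on that class --- exactly the fact the paper quotes from \cite{thad} just before Theorem \ref{existencecone}. Your route is shorter, avoids the sheaf-theoretic computation, and never actually needs the chamber-interior hypothesis for the descent itself (you invoke it only for the side remark that $\L^{ss}_{\kappa}=\L^{s}_{\kappa}$, a standard VGIT fact that is not literally contained in the statement of Theorem \ref{existencecone}); so your argument in fact proves the proposition for an arbitrary linearization. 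What the paper's computation buys in exchange is the explicit description $\L^{ss}_{\kappa}=\pi^{-1}(X^{ss}_{\kappa})$, which is what the hypothesis is really for and which matches the fiberwise descriptions of phases used in Prop.~\ref{gittheorem} and the examples. Two small points to tighten in your write-up: the factorization of $\CC^*\times\L^{ss}_{\kappa}\to\L^{ss}_{\kappa}//G$ through $\CC^*\times(\L^{ss}_{\kappa}//G)$ uses that a good (GIT) quotient is a \emph{universal} categorical quotient, so it remains categorical after taking the product with $\CC^*$ (alternatively argue affine-locally, which is what the paper's gluing remark does); and ``an algebraic map to a discrete group is constant'' is cleanest phrased as: a connected algebraic family of linearizations consists of mutually algebraically equivalent ones, which is by definition constancy of the class in $NS^G$.
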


\begin{proof} 
We claim that $x \in \L^{us}_{\kappa}$ iff 
$\pi(x) \in X^{us}_{\kappa}$.
Indeed, let $x \in \L$, $K$ be the line bundle on $X$ underlying the 
linearization $\kappa$ and let us assume that $x$ is $\kappa$-unstable.
Let $s \in \Gamma(X,K^m)^G, m \in \NN$.
Then $\pi^*(s) \in \Gamma(\L,\pi^*(K)^m)^G$ for 
the linearization of $\pi^*(K)$ induced by $\kappa$ 
and hence $\pi^*s(x)=0$, 
i.e. $s(x)=0$. 

Vice versa, let $x \in \L$ and $\pi(x) \in X$ be unstable. Consider 
$s \in \Gamma(\L,(\pi^*(K)^m)^G)$.
From Leray spectral sequence: $H^p(X,R^q\pi_*(\pi^*(K)^m))\Rightarrow
H^{p+q}(\L,\pi^*(K)^m)$,
and vanishing of $R^q\pi_*(\pi^*(K)^m)$ for $q>0$,
it follows that  $$H^0(\L,\pi^*(K)^m)=
H^0(X,\pi_*(\pi^*(K)^m))=H^0(X,K^m\otimes \pi_*(\O_{\L})).$$
Using the decomposition of $\pi_*(\O_{\L})$ into eigensheaves
of $\CC^*$-action on $\L$ and identifying these eigensheaves with the sheaves 
$L^n, n \in \ZZ$ we obtain the isomorphism:
\begin{equation}\label{identification}
\Gamma (\L,(\pi^*(K)^m)^G)=\oplus_n \Gamma(X,K^m\otimes L^n)
\end{equation}
Now, since $\kappa$ is in the interior of a GIT chamber, for $0<\epsilon <<1$ 
and fixed $n$ the linearization 
$\kappa+\epsilon n \psi$, where $\psi$ is the linearization corresponding 
to the $G$-bundle $L$, belongs to the same GIT chamber as $\kappa$.
This implies that for each $n$ and $m>>0$ the linearization  
$m\kappa+n\psi=m(\kappa+{n \over m}\psi)$ belongs 
to the interior of 
the same chamber as $\kappa$ and hence invariant section 
of $K^m \otimes L^n$ must vanish at $x$. Applying this to the finite 
collection on integers $n$ for which the components 
of decomposition (\ref{identification})
of $s$ are non trivial we see that any section
in $H^0(X,\pi_*(\pi^*(K)^m))=H^0(\L,\pi^*(K)^m)$ 
vanishes at $x$ for large $m$ 
i.e. $x$ is unstable.


This implies that $\L^{ss}$ is $\CC^*$-invariant 
and hence restriction induces the $\CC^*$-action on semi-stable locus. 
Since $L$ is a $G$-bundle,
 this $\CC^*$-action commutes 
with $G$-action the gluing construction of 
$\L//_{\kappa}G$ yields that considered $\CC^*$ action 
descends to this GIT quotient.  
\end{proof}






Singularities of the open subset $X^s/G \subset X^{ss}//G$ 
are quotient singularities (corresponding to orbits of points with 
finite non-trivial stabilizers) while singularities in the complement $X^{ss}\setminus X^s$ 
can be more 
complicated (cf. \cite{kirwanpartial} for a description 
of their resolution). 

It follows from \cite{geras} that a quotient $X//G$ 
by the action of a torus can be represented 
as {\it a global quotient} i.e. there exist a smooth quasi-projective variety 
$\widetilde X$ and finite group $\Gamma$ such that 
$X//G=\widetilde {X}/\Gamma$. The next proposition discusses liftings 
of $\CC^*$-actions in the general context of global quotients by finite
groups.

\begin{prop}\label{morphismgit}
Let $\kappa_1$ and $\kappa_2$ be two linearizations 
of  $G$-action on $\L$ as in Def. \ref{phasesdef} and let 
$X_i=(\L)//_{\kappa_i}G$ for $i=1,2$ be corresponding 
GIT quotients such that 
there is a birational morphism $\psi: X_1 \rightarrow X_2$. 
Assume that 
\begin{enumerate}
\item there exist a finite group $\Gamma$ acting 
on a smooth manifold $\tilde X_2$
and morphism $\tilde \psi$ of $\tilde X_1=\tilde X_2 \times_{X_1} X_1$ making the diagram 
commutative: 
\begin{equation}\label{pushout}
\begin{matrix}\tilde X_1 & \buildrel \tilde \psi \over \rightarrow & \tilde X_2
 \cr \pi_1\downarrow &  & \pi_2 \downarrow \cr 
\tilde X_1/\Gamma=X_1 & \buildrel \psi \over 
\rightarrow & \tilde X_2/\Gamma=X_2 \cr
\end{matrix}
\end{equation}



\item the action of $T=\CC^*$ on total space $\L_2$ of bundle
$L_2$ descends to the $\CC^*$-action on 
$X_2$ (e.g. if conditions of Prop. \ref{actionexistence} are met) 
so that it preserves the branching locus of $\pi_2$.
\end{enumerate}

Then this action on $X_2$ 
lifts to the action of a finite 
cover $\tilde T$ of $T$ on 
$\tilde X_2, X_1,\tilde X_1$ so that the diagram (\ref{pushout}) 
is $\tilde T$-equivariant.\footnote{action of $\tilde T$ on $X_1,X_2$ is not effective 
with $\Ker \tilde T \rightarrow T$ acting trivially on $X_1,X_2$}
\end{prop}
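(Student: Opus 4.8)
The plan is to produce the lifted action in two movements: first lift the given $T=\CC^*$-action from $X_2$ to the smooth cover $\tilde X_2$, and only afterwards transport it to $X_1$ and to the fiber product $\tilde X_1=\tilde X_2\times_{X_2}X_1$. The hard part will be the first movement, and more precisely the \emph{monodromy of $\pi_2$ around the $\CC^*$-orbits}: a $\CC^*$-orbit in $X_2$ is topologically a circle, and the finite covering $\pi_2$ has monodromy around it valued in the deck group $\Gamma$; the action lifts to $\tilde X_2$ only after replacing $T$ by the finite cover $\tilde T$ that unwinds this monodromy, which is exactly why the statement must pass to $\tilde T$ and why the kernel of $\tilde T\to T$ acts non-trivially only on the covers.

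First I would restrict to the unramified locus. Let $B\subset X_2$ be the branch locus of $\pi_2$; by hypothesis (2) it is $T$-invariant, so $U_2=X_2\setminus B$ and $\tilde U_2=\pi_2^{-1}(U_2)$ are invariant and $\pi_2\colon\tilde U_2\to U_2$ is a finite \'etale covering with deck group $\Gamma$. Lifting the action amounts to lifting the holomorphic map $a(\lambda,\tilde x)=\lambda\cdot\pi_2(\tilde x)$ from $\CC^*\times\tilde U_2$ to $U_2$ through $\pi_2$. Since $\pi_1(\CC^*)=\ZZ$, the covering-lifting criterion has a single obstruction, coming from the generator of $\pi_1(\CC^*)$: fixing $\tilde x_0$ over $w_0$, the loop $\lambda\mapsto a(\lambda,\tilde x_0)$ is the orbit circle of $w_0$, whose monodromy is an element $g\in\Gamma$. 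As $\Gamma$ is finite, after replacing $T$ by $\tilde T=\CC^*\to T,\ \mu\mapsto\mu^N$, with $N$ the exponent of $\Gamma$ (so $g^N=1$ uniformly over the finitely many components of $U_2$), the criterion holds and I obtain a holomorphic lift $\tilde a\colon\tilde T\times\tilde U_2\to\tilde U_2$ normalized by $\tilde a(1,\tilde x)=\tilde x$. That $\tilde a$ is an action follows by uniqueness of lifts: both sides of the associativity identity, and of the identity axiom, are lifts of the same map on the connected space $\tilde T\times\tilde T\times\tilde U_2$ (its components treated separately if needed) agreeing at the identity, hence coincide. The subgroup $\Ker(\tilde T\to T)=\mu_N$ then acts on $\tilde U_2$ through $\Gamma$ and trivially on $U_2$.

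Next I would extend the action over the ramification. Because $\tilde X_2$ is smooth, hence normal, and $\pi_2$ is finite, the everywhere-defined map $\Phi(\tilde\mu,\tilde x)=\bar{\tilde\mu}\cdot\pi_2(\tilde x)$ (bar denoting the image in $T$) already carries the lift $\tilde a$ over the dense open $\tilde T\times\tilde U_2$. Forming the finite base change $P=(\tilde T\times\tilde X_2)\times_{\Phi,X_2,\pi_2}\tilde X_2$, the lift is a section over $\tilde T\times\tilde U_2$; the closure of its graph in $P$ is finite and birational over the normal variety $\tilde T\times\tilde X_2$, so by Zariski's main theorem it is an isomorphism onto the source, giving a unique holomorphic extension $\tilde a\colon\tilde T\times\tilde X_2\to\tilde X_2$, and the action axioms persist by density. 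Since this lift is canonical, conjugation by $\tilde T$ permutes the deck transformations, giving $\tilde T\to\mathrm{Aut}(\Gamma)$; as $\tilde T$ is connected and $\mathrm{Aut}(\Gamma)$ finite, it is trivial, so the $\tilde T$- and $\Gamma$-actions commute and $\tilde a$ descends to $X_2=\tilde X_2/\Gamma$, recovering the original $T$-action through $\tilde T\to T$ with $\mu_N$ acting trivially on $X_2$.

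Finally, for $X_1$ and $\tilde X_1$: the fiberwise $\CC^*$ on $\L$ induces, exactly as for $X_2$ in Prop.~\ref{actionexistence}, a $T$-action on $X_1$ for which the inclusion $\L^{ss}_{\kappa_1}\subseteq\L^{ss}_{\kappa_2}$, hence $\psi$, is $T$-equivariant; I regard this as a $\tilde T$-action through $\tilde T\to T$. On $\tilde X_1=\tilde X_2\times_{X_2}X_1$ I then put the diagonal action $\tilde\mu\cdot(\tilde x,x_1)=(\tilde\mu\cdot\tilde x,\ \bar{\tilde\mu}\cdot x_1)$, which lands in the fiber product precisely because $\pi_2$ is now $\tilde T$-equivariant and $\psi$ is $T$-equivariant. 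This makes $\tilde\psi$ and $\pi_1$ equivariant, renders the diagram (\ref{pushout}) a diagram of $\tilde T$-spaces, and has $\mu_N=\Ker(\tilde T\to T)$ acting trivially on $X_1$ and $X_2$, as asserted in the footnote.
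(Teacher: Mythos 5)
Your proof is correct, and its engine is the same one the paper relies on: a lift of the $\CC^*$-action to $\tilde X_2$ is unique up to deck transformations, the resulting ambiguity around loops in $\CC^*$ (the orbit circles) is monodromy valued in $\Gamma$, and replacing $T$ by the finite cover $\tilde T$ kills it. The execution, however, is genuinely different. The paper disposes of the lifting in two sentences by citing \cite{losik} for local liftability of automorphisms of a finite quotient preserving the ramification divisor, and then appeals to analytic continuation to see that the ambiguity is a well-defined element of the covering group. You instead work globally on the \'etale locus $U_2=X_2\setminus B$ with classical covering-space theory (the $\pi_1(\CC^*)=\ZZ$ lifting obstruction, trivialized after the base change $\mu\mapsto\mu^N$ with $N$ the exponent of $\Gamma$), extend over the ramification by a graph-closure argument using finiteness of $\pi_2$, normality of $\tilde T\times\tilde X_2$ and Zariski's main theorem, and prove descent to $X_2$ by the connectedness-of-$\tilde T$ versus finiteness-of-$\mathrm{Aut}(\Gamma)$ trick. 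What this buys is self-containedness, plus a clear display of exactly where hypothesis (2) (invariance of the branch locus) enters; what the paper's route buys is brevity and a lifting statement that is local from the outset, hence indifferent to connectedness issues. You also make explicit what the paper leaves implicit: how $X_1$ and $\tilde X_1$ acquire their actions. One caveat there: the hypotheses literally provide only \emph{some} birational morphism $\psi:X_1\to X_2$, so your claims that $X_1$ carries a $T$-action and that $\psi$ is $T$-equivariant require reading $\psi$ as the VGIT wall-crossing morphism induced by the inclusion $\L^{ss}_{\kappa_1}\subseteq\L^{ss}_{\kappa_2}$ of Theorem \ref{existencecone}(2), with $\kappa_1$ interior to a chamber so that Prop. \ref{actionexistence} applies; this is the intended situation, and the paper's own proof (which lifts actions on both ``GIT quotients $X_1,X_2$'') makes the same implicit assumption, so this is a reading of the statement rather than a defect of your argument.
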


\begin{proof} Conditions for the local lifting of automorphisms 
of quotients by a finite group on ramification divisor 
of the quotient from \cite{losik}
are satisfied for the GIT quotients $X_1,X_2$. 
Such a lift is unique up to possible ambiguity due to analytic continuation
which produces a well defined element in the covering group so that 
the action of the covering group $\tilde T$ is well defined.
\end{proof}



\section{Elliptic genus}\label{ellgensection}

\subsection{Orbifold Equivariant elliptic genus of pairs}
Let $\G$ and $\Gamma$ be respectively an algebraic group and a finite 
group both acting on quasi-projective manifolds $X$ 
via biholomorphic automorphisms so that two actions commute.
The following describes expansion of holomorphic Euler characteristic
of a sheaf which is both $\G$ and $\Gamma$ equivariant
in terms of the characters of $\G$.
Note that our choice of generators 
in the lattice of characters of $\G$ inside $\Char \G \otimes \QQ$
depends on $\Gamma$ and made so that the lattice of characters of 
the quotient of $\G$ acting on $X/\Gamma$ 
effectively will be primitive. For the rest of the paper 
we will be interested in the case $\G=\CC^*$ so we shall denote 
the algebraic group of the automorphisms as $T$.

\begin{dfn}\label{equivareulerchar} 
Let $X$ be a smooth quasi-projective variety,
$T$ and $\Gamma$ be a connected algebraic
 and finite abelian groups respectively 
both acting holomorphically on $X$ so that both actions commute. 
Let $\Gamma_0=T \cap \Gamma$ be normal in $T$ and $T_0=T/\Gamma_0$. 
Let $\F$ be a sheaf on $X$ which is 
 both  $T$ and $\Gamma$-equivariant. 
Assume that the actions on $\F$ commute so each $\Gamma$-eigensheaf
${\F}_{\lambda}$ of $\F$ supports the action of $T$.
Equivariant Euler characteristic of ${\F}_{\lambda}$ is 
\begin{equation}
       \chi^T(\F_{\lambda})=\sum (-1)^idim H^i(X,\F_{\lambda})_me^m=
\int_X ch_T(\F_{\lambda})Td_T(X) 
\end{equation}
where $m \in Char(T_0) \otimes \QQ =Char(T)\otimes \QQ$ 
and $e^m$ is corresponding to a character $m \in Char T_0$ 
element of group ring of $Char T_0$, $ch_T, Td_T$ are the 
$T$-equivariant
Chern and Todd classes respectively 
(cf. \cite{EG} for a discussion of 
equivariant Riemann-Roch).
\end{dfn}
Elements of $\Char T$ are 
linear combinations with $\QQ$-coefficients of elements in $\Char T_0$
and the series (\ref{equivareulerchar}) has fractional exponents.  

Though $T_0$ does not act on $X$ (but does so on $X/\Gamma$) 
the reason to express $\chi_T(\F)$ in terms of characters of $T_0$
is that we will be interested in sheaves $\F$ used to describe 
the invariants of $X/\Gamma$. 
In particular we will consider equivariant version of elliptic class 
introduced in \cite{BLAnnals}. We refer to this paper 
for details of the definitions including the one for $T$-normal pair 
and for numerous applications. See 
 \cite{waelder} for discussion of equivariant case.

\begin{dfn}\label{ellgendef} 
Let $(X,E)$ be a Kawamata log-terminal $\Gamma$-normal pair (in particular,
$X$ is smooth and $E$ has simple normal crossings)
with $E=-\sum_k\delta_kE_k$. Assume that $T$ and $\Gamma$ are
 as in definition \ref{equivareulerchar}
and that the action of $\Gamma$ on the set of 
 components $E_k$ of $E$ is trivial.
For a character $\lambda$ of the 
subgroup $(g,h)$ of $\Gamma$ generated by a pair 
of commuting elements $g,h \in \Gamma$, denote by $V_{\lambda}$ 
the $\lambda$-eigen-bundle of the restriction of $TX\vert_{X^{g,h}}$ 
on the set $X^{g,h}$ of fixed points of $g$ and $h$. 
Let $0 \le \lambda(g)<1$ be the logarithm of the value of character on $g$ 
(i.e. the value of character at $g$ is $e^{2 \pi i \lambda(g)}$). 
Let $x_{\lambda}(t) \in H^*_{T}(X^{g,h})$  be equivariant Chern roots 
of $V_{\lambda}$, $e_k(t) \in H^2_{T}(X)$ be the equivariant 
Chern classes of $T$-bundles
$\O(E_k)$ and let 
$0 \le \epsilon_k<1 $ denotes the logarithm of the 
character of $(g,h) \subset \Gamma$ 
acting on the bundle 
$\O(E_k)$ if $X^{g,h} \subset E_k$ and zero otherwise.  
Finally let $i_{X^{g,h}*}: H^*_{T}(X^{g,h}) \rightarrow H^*_{T}(X)$
be the Gysin and $\Psi: X \rightarrow X/\Gamma$ be the quotient map.

The \emph{orbifold elliptic class} of the triple $(X,E,\Gamma)$ is 
an element 
\newline $\Psi_*(\Ell_{orb}^T(X,E,\Gamma;u,z,\tau)) \in H^*_{T_0}(X/\Gamma)$
where  
\begin{equation}\label{equivargenus}
{\cal Ell}_{orb}^T(X,E,\Gamma;u,z,\tau)):=
\frac 1{\vert \Gamma\vert }\sum_{g,h,gh=hg}\sum_{X^{g,h}}(i_{X^{g,h}})_*
\Bigl(
\prod_{\lambda(g)=\lambda(h)=0} x_{\lambda}(t) 
\Bigr)
$$
$$\times\prod_{\lambda} \frac{ \theta(\frac{x_{\lambda}(t)}{2 \pi \ii }+
 \lambda (g)-\tau \lambda(h)-z )} 
{ \theta(\frac{x_{\lambda}(t)}{2 \pi \ii }+
 \lambda (g)-\tau \lambda(h))}  \ee^{2 \pi \ii \lambda(h)z}
$$
$$\times\prod_{k}
\frac
{\theta(\frac {e_k(t)}{2\pi\ii}+\epsilon_k(g)-\epsilon_k(h)\tau-(\delta_k+1)z)}
{\theta(\frac {e_k(t)}{2\pi\ii}+\epsilon_k(g)-\epsilon_k(h)\tau-z)}
{}
\frac{\theta(-z)}{\theta(-(\delta_k+1)z)} \ee^{2\pi\ii\delta_k\epsilon_k(h)z}.
\end{equation}
\end{dfn}
\noindent ($T_0$ is as in Def. \ref{equivareulerchar})

Localization theorem represents equivariant euler characteristic 
as a sum over fixed components. For example, for a torus
  $T$ acting on a manifold $X$
one has (cf. \cite{EG}): 
\begin{equation}\label{localization}
\chi^T(X,\F)=\sum_{P \in X^T} \chi^T(X,P,i_{Y}^*(\F))
\end{equation} 
where summation is over connected components $P$ 
of the fixed point set $X^G$ of $G$ and 
$i^*$ is the pull back map of equivariant cohomology.

In the case of (\ref{equivargenus}), in the term 
of the localization form 
(\ref{localization})
corresponding to a component $P$ of the fixed point set 
of the $G$-action,  
the class $e_k(t)$ get replaced 
by the class of equivariant line bundle associated with the 
divisor $E_k$ restricted 
to $P$. 
\begin{dfn}
The equivariant elliptic genus ${Ell}_{orb}^T(X,E,\Gamma;u,z,\tau)$
 ($u \in Lie(T)^*\otimes \CC$)
is the value 
of class  ${\Ell}_{orb}^T(X,E,\Gamma;u,z,\tau)$ on the 
fundamental class of $X$. If $P$ is a fixed irreducible component 
of $T$-action then contribution of $P$ is the 
value the localization of elliptic class ${\Ell}_{orb}^T(X,E,\Gamma;u,z,\tau))$ 
on the fundamental class of $P$.
\end{dfn}

\subsection{Elliptic genus of phases}

Now we apply the set up of the last section to the case 
of GIT quotients. This is done with assumptions of existence 
of the structure of global orbifold
on the GIT quotient. 
The definition will be stated only in the case when $E$ in Def.\ref{ellgendef}
is empty.

\begin{dfn}\label{defellgenphase} ({\it Elliptic genus of a phase}) Let 
$X,G,L,\L,\kappa$ 
be as in Def. \ref{phasesdef}. 
Assume that this data satisfies the conditions of both Prop.
\ref{actionexistence} and \ref{morphismgit}.
In particular $\L//_{\kappa}G$ is endowed with the $T=\CC^*$-action, 
there exist 
a smooth manifold $\widetilde {\L//_{\kappa}G}$ acted upon by a finite abelian 
group $\Gamma$  and by $\CC^*$ so that these actions
commute and such that $\widetilde {\L//_{\kappa}G}/\Gamma=
\L//_{\kappa}G$.  
Let $P \subset \widetilde{\L//_{\kappa}G}$ be an 
irreducible component fixed by the $\CC^*$-action 
on $\widetilde {\L//_{\kappa}G}$. 
Consider the equivariant  
elliptic class 
$\Ell^{\CC^*}_{orb}(\widetilde {\L//_{\kappa}G},\Gamma,u,z,\tau)$ 
where $u$ is the infinitesimal character of $\CC^*$ acting faithfully 
on orbifold $\L//_{\kappa}G$.
Then the elliptic genus of the phase $(X,G,L,\L,\kappa)$ 
relative to the component $P$, denoted as
$Ell(\L//_{\kappa}G,P,z,\tau)$, is defined as the 
restriction of the equivariant 
elliptic genus $Ell^{\CC^*}_{orb}(\widetilde {\L//_{\kappa}G},\Gamma,P,u,z,\tau)$
on the diagonal $u=z$ of $\CC\times \CC\times \H$:
\begin{equation}Ell(\L_{\kappa}//G,P,z,\tau)=
Ell^{\CC^*}_{orb}(\widetilde {\L//_{\kappa}G},\Gamma,P,z,z,\tau)
\end{equation} 
More generally, the same definition will be used in the 
cases when $\widetilde {\L//_{\kappa}G}$ has Kawamata 
log-terminal singularities and when $Ell(\widetilde {\L//_{\kappa}G},\Gamma)$
is well defined as the orbifold elliptic 
genus of a pair via a resolution of singularities and taking 
into account the divisor determined by the discrepancies of the resolution
(cf. \cite{BLAnnals}).
\end{dfn}

We shall give examples in the next section but first we  
discuss modularity properties of our elliptic genus.
For a discussion of modular properties of elliptic genus of manifolds 
see \cite{invent}.

\subsection{Modularity}\label{modularitysec} 
\begin{theorem}\label{modularitytheorem}
 Let $\L$ be the total space of a $G$-equivariant 
line bundle over a quasi-projective manifold $X$ and 
$\X=\L//_{\kappa}G$ be the phase corresponding to a linearization $\kappa$
of $G$-action on $\L$. Assume that $\X$ 
admits a presentation 
as a global quotient $\X=\widetilde {\X}/\Gamma$ as in Prop. \ref{morphismgit}
and, in addition, that the orbifoldization group $\Gamma$ is 
subgroup of the lift of $\CC^*$-action on $\widetilde \X$ as also 
described in \ref{morphismgit}. 

Let $P \subset \widetilde {\X}$ be a compact component of the fixed point 
 set 
for this $\CC^*$-action,  
$T_{\widetilde \X}\vert_P$ be the tangent bundle to $\widetilde \X$ restricted 
on $P$ and $c_1^{eq}(T_{\widetilde \X}\vert_P) \in H^2_{\CC^*}(P)$
be its equivariant 
first Chern class.
If

\bigskip
(a) $c_1(T_{\widetilde \X}\vert_P)=0$, 

\bigskip

(b) the Calabi Yau condition (\ref{cy}) described below 
is met, 
\bigskip
\newline \noindent then the restriction of the $\CC^*$-equivariant elliptic genus 
$Ell^{\CC^*}_{orb}(P,\widetilde {\X},\Gamma,u,z)$ on $u=z$ is a Jacobi 
form of weight zero and index ${d \over 2}={{\dim \widetilde {\X}} \over 2}-1$.
\end{theorem}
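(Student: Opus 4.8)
The plan is to reduce the assertion to a direct verification of the two defining functional equations of a Jacobi form of weight $0$ and index $m=\dim\widetilde{\X}/2-1$: the modular equation under the generators $T:\tau\mapsto\tau+1$ and $S:(z,\tau)\mapsto(z/\tau,-1/\tau)$ of $SL_2(\ZZ)$, together with the elliptic equation under $z\mapsto z+a\tau+b$, $(a,b)\in\ZZ^2$. Since the definition of the elliptic genus of a phase is invoked with $E=\emptyset$, the only transcendental factors in the localized contribution of the compact component $P$ are the theta quotients $\prod_\lambda\theta(\tfrac{x_\lambda(t)}{2\pi\ii}+\lambda(g)-\tau\lambda(h)-z)/\theta(\tfrac{x_\lambda(t)}{2\pi\ii}+\lambda(g)-\tau\lambda(h))$ times the polynomial prefactor and the factors $\ee^{2\pi\ii\lambda(h)z}$, so the whole argument rests on the transformation law of $\theta$ and on the bookkeeping of how the equivariant variable $u$ enters. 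The Gysin pushforward and evaluation on $[P]$ are a fixed $\CC$-linear operation on the Chern-root coefficients, independent of $z,\tau$; compactness of $P$ is what guarantees it produces a genuine holomorphic function, and it preserves any transformation law verified at the level of the integrand.

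First I would record the effect of localization and of the specialization $u=z$. On $P$ the equivariant Chern roots split as $x_\lambda(t)=x_\lambda^P+2\pi\ii\,n_\lambda u$, where $n_\lambda$ is the weight of the $\CC^*$-action on the corresponding eigenline; setting $u=z$ turns the numerator argument into $\tfrac{x_\lambda^P}{2\pi\ii}+\lambda(g)-\tau\lambda(h)+(n_\lambda-1)z$ and the denominator argument into $\tfrac{x_\lambda^P}{2\pi\ii}+\lambda(g)-\tau\lambda(h)+n_\lambda z$. The key structural observation is that $u$ and $z$ occupy the \emph{same} elliptic slot of the theta arguments, so the diagonal $u=z$ is preserved by the Jacobi-group action and the specialized genus is again acted on by $SL_2(\ZZ)\ltimes\ZZ^2$; this is precisely what legitimizes the restriction in Definition \ref{defellgenphase}. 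The hypothesis that $\Gamma$ lies in the lift of the $\CC^*$-action guarantees that the fractional logarithms $\lambda(g),\lambda(h)$ are governed by the same weights $n_\lambda$, so the twisted sectors are carried along the diagonal consistently.

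Next I would verify the elliptic and $T$-equations using the quasi-periodicity relations $\theta(w+1)=-\theta(w)$ and $\theta(w+\tau)=-\ee^{-\pi\ii\tau-2\pi\ii w}\theta(w)$ together with the $T$-multiplier of $\theta$. Collecting the anomalies over the $\dim\widetilde{\X}$ Chern roots, the terms linear in the ordinary roots assemble into $\sum_\lambda x_\lambda^P=c_1(T_{\widetilde{\X}}|_P)$, which vanishes by hypothesis (a); the terms carrying the weights assemble into $\sum_\lambda n_\lambda$, which the Calabi--Yau condition (b) pins down to the normalizing value (concretely $\sum_\lambda n_\lambda=1$); and the purely quadratic $z$-contributions produce the index multiplier. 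A short bookkeeping then yields $m=\dim\widetilde{\X}/2-\sum_\lambda n_\lambda=\dim\widetilde{\X}/2-1$, while the equality of the number of theta factors in numerator and denominator forces weight $0$.

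The main obstacle is the $S$-equation, where essentially all the work lies. Under $S$ each theta transforms by $\theta(w/\tau,-1/\tau)=\zeta\sqrt{-\ii\tau}\,\ee^{\pi\ii w^2/\tau}\theta(w,\tau)$ for a fixed eighth root of unity $\zeta$; the $\sqrt{-\ii\tau}$ factors cancel between numerator and denominator, but the Gaussian factors $\ee^{\pi\ii(\mathrm{num}^2-\mathrm{den}^2)/\tau}$ must be shown to collapse to the single index factor $\ee^{2\pi\ii m z^2/\tau}$. Expanding $\mathrm{num}^2-\mathrm{den}^2=(-z)(2B_\lambda+(2n_\lambda-1)z)$ with $B_\lambda=\tfrac{x_\lambda^P}{2\pi\ii}+\lambda(g)-\tau\lambda(h)$, the cross terms containing $x_\lambda^P$ again sum to $c_1(T_{\widetilde{\X}}|_P)=0$ by (a), the cross terms between the weights and the fractional shifts $\lambda(g)-\tau\lambda(h)$ are absorbed by condition (b), and the coefficient of $z^2/\tau$ is $\sum_\lambda(1-2n_\lambda)=\dim\widetilde{\X}-2=2m$. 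The delicate points are twofold: that the equivariant variable $u$, which a priori would behave as a second elliptic variable under $S$, is consistently carried along $u=z$ exactly because of the weight bookkeeping enforced by (b); and that the sum over commuting pairs $(g,h)$ is invariant, since $S$ and $T$ permute these pairs (via $(g,h)\mapsto(h^{-1},g)$ and $(g,h)\mapsto(g,gh)$) while simultaneously transforming the fractional logarithms $\lambda(g),\lambda(h)$, so that the $\Gamma$-symmetrized sum is carried into itself. This is the equivariant analogue of the Borisov--Libgober modularity computation, and I would organize the proof so that (a) and (b) are invoked precisely at these two cancellations, concluding by assembling the $T$-, $S$- and lattice-equations into the Jacobi property of weight $0$ and index $\dim\widetilde{\X}/2-1$.
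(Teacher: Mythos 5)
Your proposal is correct and follows essentially the same route as the paper's proof: the paper likewise writes the localized genus at $u=z$ as a $\Gamma$-symmetrized sum of theta quotients $\Psi(a,b,q,x,z,\tau)$, verifies the four Jacobi-group transformation laws factor by factor (with the sectors $(a,b)$ permuted under $T$ and $S$ exactly as you describe, and the $S$-identity imported from Borisov--Libgober), and invokes (a) to cancel the anomalies linear in the Chern roots, (b) for the terms involving the weights $q_\lambda$ and the fractional shifts, and $\sum_i(1-2q_i)=\dim\widetilde{\X}-2$ for the index.
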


\begin{proof} Let $n$ be the order of group $\Gamma$, which 
is cyclic since we assume that $\Gamma \subset \CC^*$. 
We shall identify it with the group of roots of unity $\mu_n$. 
Let $T_{\widetilde \X}\vert_P=\oplus V_i$ be a split into a 
sum of rank one eigen-bundles of the above $\CC^*$-action
(some possibly equivariantly isomorphic). 
Let $x_i \in H^2(P,\ZZ)$ be the first Chern class of $V_i$
and $q_i \in \QQ$ is such that 
$q_iu$ be the infinitesimal character of $\CC^*$-action 
corresponding to the bundle $V_i$
($u$ is the infinitesimal character of the group $\CC^*/\Gamma$ acting 
on $\widetilde \X/\Gamma$). Note that $nq_i \in \ZZ$ since ${\rm ord}(\Gamma)=n$.
For $\chi_i \in \Char \mu_n$ corresponding to the action of   
$\Gamma=\mu_n$ on summand $V_i$ we have: 
$\chi_i(1)=exp {{2 \pi i q_i}}$. 
Vanishing of equivariant first Chern class 
implies that:
\begin{equation}
 \sum_i x_i=0, 
\end{equation}
We shall assume further the following Calabi Yau condition: 
\begin{equation}\label{cy}
   \sum_{i=1}^{i=n} q_i=1
\end{equation}
This implies that 
$$(c) \ \ \Gamma \ {\rm acts \ trivially \ on} \ det T_{\tilde X}\vert_P$$
since we assume that $\Gamma \subset \CC^*$. 
As in \cite{sing} (and Def. \ref{ellgendef} above), 
we replace $\chi \in \Char \mu_n$ by logarithm 
$\lambda: \mu_n \rightarrow \QQ$ such that 
$\chi(g)=exp(2 \pi i \lambda(g)), 0 \le \lambda(g) <n$.
With these notations, one has the following expression for 
the contribution into the orbifold elliptic genus of $\X$ corresponding 
to a component $P$ (similar to expression 
for orbifold elliptic genus in \cite{sing}):
\begin{equation}\label{ellgenphase}
Ell(\L//_{\kappa}G,P)=Ell^{\CC^*}_{orb}(\widetilde {\X},P,\mu_n)=
\end{equation}
$${1\over n}\sum_{g,h, \in \mu_m} \prod_{\lambda(g)=\lambda(h)=0}x_{\lambda}
\prod_{i}\Phi(g,h,\lambda_i,x_i,z,\tau)[\widetilde {\X}^{g,h}]
$$
The factor $\Phi(g,h,\lambda,x,z,\tau)$
defined by:
\begin{equation}
\Phi(g,h,\lambda,x,z,\tau)=
{{\theta({x \over {2 \pi i}}+(q_{\lambda}-1)z+\lambda(g)-\lambda(h)\tau)}
\over {\theta({x \over {2 \pi i}}+q_{\lambda}z+\lambda(g)-\lambda(h)\tau)}}
e^{2 \pi i \lambda(h) z}
\end{equation}
where $q_{\lambda}\in \QQ$ is such that 
$q_{\lambda}u$ is the infinitesimal character of 
the $\CC^*$-action on the eigen-bundle corresponding to $\lambda$ 
and $x \in H^2(P,\ZZ)$. This is specialization to $u=z$ 
of the equivariant version of the corresponding expression 
in \cite{sing}.
For $a,b \in \ZZ/n\ZZ, q \in {1 \over n}\ZZ$ we define 
\begin{equation}
\Psi(a,b,q,x,z,\tau)=
{{\theta({x \over {2 \pi i}}+(q-1)z+{{qa-qb\tau}\over n})}
\over {\theta({x \over {2 \pi i}}+qz+{{qa-qb\tau}\over n})}}
e^{2 \pi i z{qb\over n}}  
\end{equation}
so that for $g,h \in \ZZ/n\ZZ, g=a \bmod n,h=b \bmod n$ and $q=q_{\lambda} $ 
one has: 
$\Phi(g,h,\lambda,x,z,\tau)= \Psi(a,b,q,x,z,\tau)$.
We have the following identities:
\begin{equation}\label{id1}
 \Psi(a-1,b,x,q,z+1,\tau)=-\Psi(a,b,x,z,\tau)
\end{equation}
\begin{equation}\label{id2}
\Psi(a,b+1,x,q,z+\tau,\tau)=e^{x-2 \pi i (1-2q)z+2\pi i qa-(1-2q)\pi i \tau} 
\Psi(a,b,x,z,\tau)
\end{equation}
\begin{equation}\label{id3}
\Psi(a-b,b,x,q,z,\tau+1)=\Psi(a,b,x,z,\tau)
\end{equation}
\begin{equation}\label{id4}
\Psi(a,b,x,q,{z \over \tau},-{1 \over \tau})=
e^{-xz+{{\pi i (1-2q)z^2} \over {\tau}}-2 \pi i zq a}
\Psi(b,-a,x,z,\tau)
\end{equation}
Identity (\ref{id1}) uses $\theta(z+1,\tau)=-\theta(z,\tau)$ 
(cf. \cite{chandr}). The identity (\ref{id3}) is clear and 
(\ref{id4}) follows as the corresponding identity in the proof 
of the theorem 4.3 in \cite{sing}.
Finally 
$$\Psi(a,b+1,x,z+\tau,\tau)=
{{\theta({x \over {2 \pi i}}+(q_i-1)(z+\tau)+
qa-q(b+1)\tau)}
\over {\theta({x \over {2 \pi i}}+q(z+\tau)+qa-q(b+1)\tau)}}
e^{2 \pi i q(b+1) (z+\tau)}=
$$ 
$${{\theta({x \over {2 \pi i}}+(q-1)z-\tau+
qa-qb\tau) }
\over {\theta({x \over {2 \pi i}}+qz+qa-qb\tau)}}
e^{2 \pi i qb z}e^{2 \pi i q z}e^{2 \pi i qb \tau}e^{2 \pi i q \tau}
$$
Using $\theta(z-\tau)=-\theta(z)e^{2 \pi i z-\pi i \tau}$ we obtain (\ref{id2}).
Now Jacobi property of (\ref{ellgenphase}) follows since $\sum_i(1-2q_i)=
\dim \X-2$.
\end{proof}

\subsection{LG elliptic genus}
Another kind of elliptic genus, which is associated with 
weighted homogeneous polynomials, was proposed in 
physics literature (cf. \cite{kawai},\cite{berglund} 
and references therein).

\begin{dfn}\label{berglundgenus}
(LG elliptic genus) (cf. \cite{berglund}).
 Let $G_W$ be abelian group and $R$ a representation of $G$ in $\CC^n$ 
which preserves a weighted
homogeneous polynomial given in coordinates of a basis 
$e_i, i=1,...,n$ as $W(x_1,...,x_n)$. Let $w_i,D \in \NN, i=1,...,n$
(weights and degree of $W$)
be integers such that $W(...,\lambda^{w_i}x_i,....)=\lambda^DW(...,x_i,...)$
and $q_i={w_i \over D} \in \QQ$. 
Assume that in basis $e_i$ the group $G$ acts 
diagonally and via $R(g) \cdot e_i=R_i(g)e_i=exp (2 \pi \ii \theta_i(g)) e_i, 
\theta_i(g) \in \QQ$. Finally, let $H$ be a subgroup of $G$.
Then the elliptic genus $Z[R,H]$ of the the data $(G,R,W,H)$ is given by 
\begin{equation}\label{formulaberglund}
    Z[R,H]={ 1\over {\vert H\vert}}\sum_{h_a,h_b\in H}\Pi_{i=1}^{i=n} 
Z[R_i](h_a,h_b)
\end{equation}
where 
\begin{equation} 
    Z[R_i](h_a,h_b)=e^{-2 \pi \ii \theta_i(h_a)}
{{\theta((1-q_i)z+\theta_i(h_b)-\tau \theta_i(h_a),\tau)}\\
\over{\theta(q_iz+\theta_i(h_b)+\tau \theta_i(h_a),\tau)}}
\end{equation}
\end{dfn}

The following follows by direct calculations and description 
of spectrum of weighted homogeneous singularities obtained in \cite{steenbrink}
(cf. also \cite{ebeling}; recall that the exponential grading operator $J_W$ 
is given by (\ref{gradingop})).

\begin{theorem}Specialization of 
LG elliptic genus for $\tau=\ii \infty, t=exp(2 \pi \ii z)$ coincides 
with the orbifoldization of generating function 
$\sum_{\alpha \in \QQ} dim H_{exp(2 \pi \ii \alpha)}t^{\alpha}$ where 
$\alpha$ runs through the spectrum of isolated singularity 
 $W=0$ and $H_{\chi}$ is the eigenspace of the $Gr^{[\alpha]}_FH^{n-1}(M_W)$
graded vector space of the Hodge filtration of Milnor fiber of $W$.
Orbifoldization group $H$ is the 
cyclic group generated by the exponential grading operator $J_W$ (cf. \cite{ebeling}).
\end{theorem}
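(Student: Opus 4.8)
The plan is to push the specialization $\tau\to\ii\infty$, $t=\ee^{2\pi\ii z}$, through the factors $Z[R_i](h_a,h_b)$ of Definition \ref{berglundgenus} one at a time and then to match the resulting finite sum with Steenbrink's description of the spectrum. Writing $q=\ee^{2\pi\ii\tau}$ so that $q\to0$, I would start from the product expansion
$$\theta(w,\tau)=2q^{1/8}\sin(\pi w)\prod_{\ell\ge1}(1-q^\ell)(1-q^\ell\ee^{2\pi\ii w})(1-q^\ell\ee^{-2\pi\ii w}),$$
so that $\theta(w,\tau)\sim2q^{1/8}\sin(\pi w)$ for bounded $w$. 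The only delicate point in the factors $Z[R_i]$ is the fractional shift $\mp\tau\theta_i(h_a)$ in the two theta-arguments: with $\alpha_i=\theta_i(h_a)$ and $\beta_i=\theta_i(h_b)$ in $[0,1)$, this shift weights the $m$-th term of the theta-series by $q^{\mp m\alpha_i/2}$, so that for $0<\alpha_i<1$ the dominant balance forces a single exponential to survive in each theta. Carrying this out, together with the prefactor $\ee^{-2\pi\ii\theta_i(h_a)}$, I expect the elementary limit $-\,t^{1/2}\,\ee^{2\pi\ii(\beta_i-\alpha_i)}$ when $\alpha_i\neq0$ and the ratio $\sin(\pi((1-q_i)z+\beta_i))/\sin(\pi(q_iz+\beta_i))$ when $\alpha_i=0$, the $q^{1/8}$ factors cancelling between numerator and denominator. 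In particular the untwisted block $h_a=h_b={\rm id}$ gives $\prod_i\sin(\pi(1-q_i)z)/\sin(\pi q_iz)=t^{-n/2}\prod_i(t^{q_i}-t)/(1-t^{q_i})$.

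Next I would take the product over $i$ and the normalized double sum $\frac1{|H|}\sum_{h_a,h_b}$. This sum is exactly the twisted-sector (orbifoldization) average for $H=\langle J_W\rangle$ in the sense of \cite{BLAnnals}: for a fixed $h_a$ the coordinates with $\alpha_i=0$ (the $h_a$-fixed directions) assemble a spectrum-type building block, while the coordinates with $\alpha_i\neq0$ contribute the monomial--phase weights $-t^{1/2}\ee^{2\pi\ii(\beta_i-\alpha_i)}$ that carry the age and monodromy data of that twisted sector. Summing over $h_b$ resolves the grading by the monodromy eigenvalue $\ee^{2\pi\ii\alpha}$, and summing over $h_a$ with the prefactor $\ee^{-2\pi\ii\theta_i(h_a)}$ is precisely the $J_W$-orbifold projection. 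Thus, up to the overall recentering factor $t^{-n/2}$, the specialized $Z[R,H]$ equals the $H$-orbifoldization of the single building block $E(t):=\prod_i(t^{q_i}-t)/(1-t^{q_i})$.

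It then remains to identify $E(t)$ with the spectrum. For a quasi-homogeneous isolated singularity the partials $\partial_1W,\dots,\partial_nW$ form a regular sequence of weighted degrees $1-q_i$, so the Poincar\'e series of the Milnor algebra, with $x_i$ in degree $q_i$, is $\prod_i(1-t^{1-q_i})/(1-t^{q_i})$; assigning to the monomial $\prod x_i^{a_i}$ its spectral number $\sum_i(a_i+1)q_i$ multiplies this by $t^{\sum_i q_i}$ and yields exactly $E(t)$. By Steenbrink's computation of the Hodge filtration on $H^{n-1}(M_W)$ (cf. \cite{steenbrink}, \cite{ebeling}) the coefficient of $t^\alpha$ in $E(t)$ is $\dim H_{\exp(2\pi\ii\alpha)}$, the monodromy acting on $Gr_F^{[\alpha]}$ by $\ee^{2\pi\ii\alpha}$. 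Combining this with the previous paragraph identifies the $\tau\to\ii\infty$ specialization of $Z[R,H]$ with the $J_W$-orbifoldization of $\sum_\alpha\dim H_{\exp(2\pi\ii\alpha)}t^\alpha$, as claimed.

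The hard part will be the normalization bookkeeping. I must track the fractional $\tau$-shifts carefully enough to be sure that the correct single exponential survives in each theta (hence the exact phase $\ee^{2\pi\ii(\beta_i-\alpha_i)}$), and I must reconcile the overall shift $t^{-n/2}$ --- which reflects the recentering of the elliptic-genus variable by half the central charge --- with Steenbrink's normalization, so that the exponent of $t$ is literally the spectral number $\alpha$ and the phase sector is literally the monodromy eigenvalue $\ee^{2\pi\ii\alpha}$. The second delicate point is to check that the double sum over $H$ reproduces the orbifoldization on the right-hand side on the nose, including the age shifts of each twisted sector, rather than merely matching it up to an overall normalization.
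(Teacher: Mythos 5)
Your proposal is correct and is essentially the paper's own (largely unwritten) argument: the paper disposes of this theorem in one line --- ``follows by direct calculations and the description of the spectrum of weighted homogeneous singularities obtained in Steenbrink's paper'' --- and your $q\to 0$ limits of the theta factors, the resulting untwisted block $t^{-n/2}\prod_i (t^{q_i}-t)/(1-t^{q_i})$, and its identification with the spectrum via the Poincar\'e series of the Milnor algebra are precisely that direct calculation. The two points you flag as delicate (the $t^{-n/2}$ recentering by half the central charge, and which of $h_a,h_b$ labels sectors versus implements the $J_W$-projection) are genuine convention-matching issues, but the paper does not address them either, so your account is if anything more complete than the source.
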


Now for Landau Ginzburg phase (cf. Example \ref{lgmodelsigmamodel}) the only fixed point
of the $\CC^*$-action on $\CC^n/<J_W>$ is the origin. The infinitesimal 
characters of the lift of this action on $\CC^n$ are ${w_iu}\over D$ 
(where $u$ is the infinitesimal character of the $\CC^*$ action on 
$\CC^n/<J_W>$).
Hence we obtain:

\begin{prop} The elliptic genus of Landau Ginzburg phase (cf .
Def \ref{phasesdef},(2)) 
coincides with the LG elliptic genus (\ref{berglundgenus}).
\end{prop}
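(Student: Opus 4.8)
The plan is to show that the two elliptic genera agree by unwinding both definitions to a common product formula over the coordinate eigen-bundles and matching the $\theta$-quotients factor by factor. The key structural observation is that for the Landau-Ginzburg phase the geometry is maximally degenerate: the orbifold is $\CC^n/\langle J_W\rangle$, the only fixed point of the lifted $\CC^*$-action is the origin, and over that point the tangent bundle is simply $\CC^n$ split as a sum of $n$ one-dimensional weight spaces. Consequently every equivariant Chern root $x_i$ vanishes and the cohomological content of the elliptic class of Definition \ref{defellgenphase} collapses to a pure character computation, with no genuine integration over $P$ remaining.

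First I would instantiate the general formula (\ref{ellgenphase})--(\ref{id2}) at this fixed point. Since $P$ is a point, $[\widetilde{\X}^{g,h}]$ is just the origin, the product $\prod_{\lambda(g)=\lambda(h)=0}x_\lambda$ is empty (all $x_i=0$), and each factor $\Phi(g,h,\lambda_i,x_i,z,\tau)$ reduces to its value at $x_i=0$. By the description preceding the proposition, the infinitesimal character of the lifted action on the $i$-th coordinate is $\frac{w_i u}{D}=q_i u$, so after the specialization $u=z$ the relevant $q_\lambda$ in $\Phi$ is exactly $q_i=\frac{w_i}{D}$. Writing $\Gamma=\mu_n=\langle J_W\rangle$ with $n=D$, the pair $(g,h)$ runs over $\mu_D\times\mu_D$, and the values $\lambda(g),\lambda(h)$ on the $i$-th summand are precisely the logarithms $\theta_i(g),\theta_i(h)$ of the $J_W$-character (\ref{gradingop}) evaluated at $g,h$. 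Substituting $x_i=0$ into $\Phi$ then yields
\begin{equation}
\Phi(g,h,\lambda_i,0,z,\tau)=
\frac{\theta\bigl((q_i-1)z+\theta_i(g)-\theta_i(h)\tau\bigr)}
{\theta\bigl(q_i z+\theta_i(g)-\theta_i(h)\tau\bigr)}
\,e^{2\pi\ii\,\theta_i(h)z}.
\end{equation}

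Next I would compare this with the summand $Z[R_i](h_a,h_b)$ of Definition \ref{berglundgenus}. Identifying $(g,h)$ with $(h_b,h_a)$ (so that $\theta_i(g)=\theta_i(h_b)$ and $\theta_i(h)=\theta_i(h_a)$), the numerator and denominator $\theta$-arguments match after using $\theta(w,\tau)=-\theta(-w,\tau)$ together with $1-q_i$ in place of $q_i$, and the exponential prefactors $e^{2\pi\ii\theta_i(h)z}$ versus $e^{-2\pi\ii\theta_i(h_a)}$ are reconciled by the parity relation $\theta(w+1)=-\theta(w)$ (cf. (\ref{id1})) and the quasi-periodicity already recorded in (\ref{id2}); these are exactly the transformation properties invoked in the proof of Theorem \ref{modularitytheorem}. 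The overall normalizations agree because both definitions carry the prefactor $\frac{1}{|\Gamma|}=\frac{1}{|H|}$ and both sum over the same index set $\mu_D\times\mu_D=H\times H$ with $H=\langle J_W\rangle$. Taking the product over $i=1,\dots,n$ then identifies (\ref{ellgenphase}) at the origin with (\ref{formulaberglund}).

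The main obstacle I anticipate is purely bookkeeping: reconciling the two sign and normalization conventions for the arguments of $\theta$. The expression $\Phi$ is written with arguments $\tfrac{x}{2\pi\ii}+(q_\lambda-1)z+\lambda(g)-\lambda(h)\tau$, whereas Definition \ref{berglundgenus} uses $(1-q_i)z+\theta_i(h_b)-\tau\theta_i(h_a)$ in the numerator and $q_i z+\theta_i(h_b)+\tau\theta_i(h_a)$ in the denominator, so one must track carefully which of $g,h$ plays the role of $h_a$ versus $h_b$ and absorb the sign flips through the oddness and quasi-periodicity of $\theta$. Once this dictionary is fixed, the equality is term-by-term and the proposition follows; I would therefore keep the verification at the level of matching arguments rather than recomputing the $\theta$-identities, which are already established above.
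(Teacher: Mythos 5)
Your proposal follows the same route as the paper's own argument: localization to the unique $\CC^*$-fixed point (the origin of $\CC^n/\langle J_W\rangle$), identification of the infinitesimal characters of the lifted action as $w_iu/D=q_iu$, specialization $u=z$, and term-by-term comparison with Definition \ref{berglundgenus}. This is precisely the computation the paper carries out in formula (\ref{annalsspecial}) of Section 5.4 (part (\ref{localcontributionlg}) of the theorem there), which it then declares ``identical to (\ref{formulaberglund})''; your final product formula and your dictionary $(g,h)\leftrightarrow(h_b,h_a)$ agree with that, and the sign/prefactor bookkeeping via oddness and quasi-periodicity of $\theta$ is the same reconciliation the paper leaves implicit.

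However, one step of your justification is wrong as written, even though the formula you extract from it is correct. The product $\prod_{\lambda(g)=\lambda(h)=0}x_\lambda$ in (\ref{ellgenphase}) is \emph{not} empty: for $g=h=1$ it runs over all $n$ coordinate directions (and over several directions for any pair fixing a coordinate line). If one literally sets all $x_i=0$ there, this prefactor annihilates the untwisted sector and every partially-untwisted sector, so those terms would contribute $0$ rather than $\prod_i\theta\bigl((q_i-1)z+\cdots\bigr)/\theta\bigl(q_iz+\cdots\bigr)$, and the sum would not reproduce (\ref{formulaberglund}). The correct mechanism is that in the equivariant localization at the isolated fixed point the classes $x_\lambda$ restrict to the nonzero equivariant weights $2\pi\ii q_\lambda u$, and the prefactor $\prod_{\lambda(g)=\lambda(h)=0}2\pi\ii q_\lambda u$ cancels against the equivariant Euler class of the tangent space to $\widetilde{\X}^{g,h}$ at the origin appearing in the denominator of the localization formula; what survives is exactly the product of theta-quotients with $x_\lambda/2\pi\ii$ replaced by $q_\lambda u$. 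With this repair (dropping the prefactor because it cancels, not because it is empty), your argument coincides with the paper's proof.
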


\section{Main theorem and Explicit forms of LG/CY 
correspondence}\label{maintheoremsection}

\subsection{Equivariant elliptic genus in birational morphisms}

Recall the following equivariant version of McKay correspondence 
for elliptic genus (theorem 
5.3 in \cite{BLAnnals} and theorem 10 in \cite{waelder}).
We give a weaker form which assumes existence of crepant resolution
since this is sufficient in all examples we consider below. 
We refer to \cite{BLAnnals} and \cite{waelder} for versions which involves 
elliptic genus of pairs and includes corrections corresponding to 
discrepancies of a resolution map. 
\begin{theorem}\label{mckay} (local equivariant McKay correspondence) 
Let $X$ be a smooth quasi-projective 
variety, $\Gamma$ and $G$ are respectively finite and reductive 
groups acting on $X$ so that the actions commute. Let $T$ be the maximal 
torus of $G$.
Assume that 
\begin{enumerate}
\item there exist a crepant $T$-equivariant resolution 
$\pi: \  \tilde X \rightarrow 
X/\Gamma$ i.e. $K_{\tilde X}=\pi^*(K_{X/\Gamma})$.
\item fixed point sets of $T$ action on $X$ and $\tilde X$ are compact.
\end{enumerate}
Then
\begin{equation}\label{mckayequation}
\Ell_{orb}^T(X,\Gamma,P)=\sum_{P_i}\Ell^T(\tilde X,P_i)
\end{equation}
where the sum is taken over all 
fixed components $P_i$ of $T$-action mapped to $P$.
\end{theorem}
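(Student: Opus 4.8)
The plan is to deduce the statement from the non-equivariant McKay correspondence of \cite{BLAnnals} by means of $T$-equivariant localization, exploiting the fact that the asserted identity is already written in localized form: it equates the contribution of a single fixed component $P$ on the orbifold side with the sum of contributions of the fixed components $P_i$ lying over $P$ on the resolution side. Since $\pi$ is $T$-equivariant, it carries the $T$-fixed locus of $\tilde X$ into the $T$-fixed locus of $X/\Gamma$, and the components $P_i$ mapping to $P$ exhaust the fibre of $\pi$ over the part of that fixed locus supported near $P$. Compactness of the fixed loci (hypothesis (2)) guarantees that the localization theorem (\ref{localization}) applies to both $\Ell^T_{orb}(X,\Gamma)$ and $\Ell^T(\tilde X)$, so that each side is determined by its fixed-point contributions and it suffices to match them component by component.

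The core tool I would use is a $T$-equivariant transformation (push-forward) formula for elliptic classes under crepant birational morphisms, refining the non-equivariant statement of \cite{BLAnnals}. Concretely, one wants
$$\pi_*\,\Ell^T(\tilde X) = \Ell^T_{orb}(X,\Gamma),$$
where the crepant hypothesis $K_{\tilde X}=\pi^*(K_{X/\Gamma})$ is exactly what makes the discrepancy divisors vanish, so that no correction factors of the type appearing in Def.\ \ref{ellgendef} (the $\delta_k$-terms) survive. I would establish this by factoring $\pi$, via the $T$-equivariant weak factorization theorem, into a chain of blow-ups and blow-downs along smooth $T$-invariant centers, and then verifying an equivariant blow-up identity on each elementary step. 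That identity reduces, after restriction to the normal-bundle eigen-data of the center, to theta-function identities of the type (\ref{id1})--(\ref{id4}): the same quasi-periodicities $\theta(z+1,\tau)=-\theta(z,\tau)$ and $\theta(z-\tau)=-\theta(z)\ee^{2\pi\ii z-\pi\ii\tau}$ that drive the modularity argument here force the exceptional contributions to cancel once crepancy holds.

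On the orbifold side I would then match the commuting-pairs expression (\ref{equivargenus}) to $\pi_*\,\Ell^T(\tilde X)$. This uses the eigenbundle decomposition $TX\vert_{X^{g,h}}=\bigoplus_\lambda V_\lambda$ entering Def.\ \ref{ellgendef}, together with the identification of the $T$-equivariant Chern roots $x_\lambda(t)\in H^*_T(X^{g,h})$ with those appearing in the localized resolution genus. The equivariance is carried throughout by keeping these classes rather than their numerical specializations, so that the non-equivariant equality of Jacobi forms of \cite{BLAnnals} upgrades to an equality in $H^*_{T_0}(X/\Gamma)$ once one checks that the relevant theta-quotients agree as equivariant classes.

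The hard part will be this last upgrade: lifting the numerical McKay identity to an equality of $T$-equivariant classes supported on the possibly non-compact fixed loci, and controlling the combinatorics of the crepant resolution in a $T$-invariant analytic neighbourhood of each component $P$. The subtlety is that the individual $P_i$ need not be crepant-birational to $P$ in isolation, so the matching is genuinely a statement about the whole fibre of $\pi$ over $P$; the equivariant weak factorization and the compactness hypothesis (2) are what let me treat this fibre as a closed object and apply the push-forward formula there. This is precisely the verification carried out in \cite{waelder}, to which I would appeal for the technical details.
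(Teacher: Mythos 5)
The paper gives no proof of this theorem: it is explicitly recalled as Theorem 5.3 of \cite{BLAnnals} and Theorem 10 of \cite{waelder}, stated in a weakened form (existence of a crepant resolution is assumed precisely so that no discrepancy corrections of the type in Def.~\ref{ellgendef} appear). Your proposal---which sketches the localization, equivariant weak factorization, and push-forward machinery internal to those references and then defers to \cite{waelder} for the technical verification---is in substance the same appeal to the known equivariant McKay correspondence, so it takes essentially the same route as the paper.
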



\begin{remark}\label{liealgebrafunction} Equivariant 
elliptic genera appearing in 
(\ref{mckayequation}) are functions on 
$\CC \times (Lie(T)^*\otimes \CC) \times \H$
where the first and third coordinates correspond to 
the variables of theta function $\theta(z,\tau)$ and $Lie(T)^*$
is the dual of the Lie algebra of the maximal torus
i.e. the space of infinitesimal characters of representation of $T$.
Recall again that as was done in Def. \ref{defellgenphase} and Theorem 
\ref{modularitytheorem}, 
we normalize variables in $Lie(T)^* \otimes \CC$ 
by choosing basis given by the characters of the quotient group 
of $T$ which acts faithfully on $X/\Gamma$ i.e. the group 
$T/T \cap \Gamma$. 
\end{remark}




\subsection{Main theorem}

\begin{theorem}\label{main}
 Let $\L//_{\kappa_1}G=X_1=\bar X_1/\Gamma,\L//_{\kappa_2}G=
X_2=\bar X_2/\Gamma,
\tilde X_1,\tilde X_2,\Gamma$ are as 
in Prop. \ref{morphismgit}. Assume that 
$\psi: X_1 \rightarrow X_2$ is 
a K-equivalence i.e. $\psi^*(K_{X_2})=K_{X_1}$.
Then
\begin{equation}\label{maintheoremformula}
  \sum_{P_i}Ell(\L//_{\kappa_1},P_i)=
 Ell(\L//_{\kappa_2},P)
\end{equation}
where $P_i$ is collection of fixed point sets which $\psi$ takes 
into $P$.
\end{theorem}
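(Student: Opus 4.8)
The plan is to deduce the identity from the equivariant McKay correspondence of Theorem~\ref{mckay} by passing to a common crepant resolution of the two phases and matching the localized contributions. Since, by Definition~\ref{defellgenphase}, each $Ell(\L//_{\kappa_i},\cdot)$ is the restriction to the diagonal $u=z$ of the equivariant orbifold genus $Ell^{\CC^*}_{orb}(\tilde X_i,\Gamma,\cdot,u,z,\tau)$, it suffices to establish the stronger \emph{equivariant} identity
\begin{equation}
\sum_{P_i}Ell^{\CC^*}_{orb}(\tilde X_1,\Gamma,P_i,u,z,\tau)=Ell^{\CC^*}_{orb}(\tilde X_2,\Gamma,P,u,z,\tau)
\end{equation}
and only afterwards specialize $u=z$. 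First I would invoke Proposition~\ref{morphismgit} to place the whole square \eqref{pushout} in the $\tilde T$-equivariant category, so that the finite cover $\tilde T\to T=\CC^*$ acts on $\tilde X_1,\tilde X_2,X_1,X_2$ with all four maps equivariant, $\tilde X_i/\Gamma=X_i$, and the $T$-fixed components $P_i\subset X_1$ are exactly those carried by $\psi$ into the chosen compact component $P\subset X_2$.

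The geometric heart of the argument is the hypothesis $\psi^*(K_{X_2})=K_{X_1}$. Using it I would produce a smooth quasi-projective variety $W$, equivariant for $\tilde T$, dominating both coarse spaces, whose structure maps $W\to X_1$ and $W\to X_2$ are both \emph{crepant}. This is precisely where K-equivalence is needed: it forces the discrepancies of $W$ computed over $X_1$ and over $X_2$ to agree, so that a single model can be made crepant over both (this requires the phases to be at worst Kawamata log-terminal, as assumed). Because the fibrewise $\CC^*$-action scales the line-bundle directions, the $T$-fixed locus of $W$ lies over the compact zero-section data, by the same mechanism that makes $P$ compact, and so is itself compact; this is the hypothesis required in order to apply Theorem~\ref{mckay}.

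With $W$ in hand I would apply the equivariant McKay correspondence twice. Applied to $(\tilde X_2,\Gamma)$ with the crepant resolution $W\to \tilde X_2/\Gamma=X_2$ it gives $Ell^{\CC^*}_{orb}(\tilde X_2,\Gamma,P)=\sum_{Q\to P}Ell^{\CC^*}(W,Q)$, where $Q$ runs over the $T$-fixed components of $W$ mapping to $P$. Applied to $(\tilde X_1,\Gamma)$ with the same $W\to X_1$ it gives, for each $P_i$, the expansion $Ell^{\CC^*}_{orb}(\tilde X_1,\Gamma,P_i)=\sum_{Q\to P_i}Ell^{\CC^*}(W,Q)$. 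Summing the latter over the $P_i$ that $\psi$ sends to $P$ and using that $W\to X_2$ factors as the composite of $W\to X_1$ with $\psi$ identifies the index set $\{Q\to P_i,\ P_i\to P\}$ with $\{Q\to P\}$; hence the two McKay expansions have identical right-hand sides, which proves the displayed equivariant identity. Restricting to $u=z$ then yields \eqref{maintheoremformula}.

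I expect the construction of the common $\tilde T$-equivariant crepant resolution $W$ with compact $T$-fixed locus to be the main obstacle. In the quasi-projective, non-compact total-space setting one must guarantee simultaneously that crepancy holds over both $X_1$ and $X_2$ (secured by the K-equivalence together with the log-terminality assumption) and that the induced $\CC^*$-fixed components of $W$ remain compact, so that localization and Theorem~\ref{mckay} are legitimate. By contrast, the combinatorial bookkeeping that matches $\{Q\to P_i\to P\}$ with $\{Q\to P\}$ is routine once $\psi$ is an honest birational morphism, as it is under the hypotheses of Proposition~\ref{morphismgit}.
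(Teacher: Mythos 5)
Your overall skeleton---reduce to the equivariant statement, pass through the McKay correspondence, and compare the two phases on a common resolution---is the same as the paper's, but there is a genuine gap at the central step: the claim that K-equivalence lets you build a single model $W$ whose maps to $X_1$ and to $X_2$ are both \emph{crepant}. This does not follow. What $\psi^*(K_{X_2})=K_{X_1}$ gives is a common resolution $p:W\rightarrow X_1$, $q=\psi\circ p:W\rightarrow X_2$ on which the two discrepancy divisors \emph{agree}, i.e. $K_W=p^*K_{X_1}+E=q^*K_{X_2}+E$ for one and the same divisor $E$; it does not force $E=0$, and Kawamata log-terminal (e.g. finite quotient) singularities in general admit no crepant resolution at all---already $\CC^2/\mu_3$ with the diagonal action is a global quotient, hence klt, with no crepant resolution. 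Since Theorem \ref{mckay} in the form you invoke it requires $K_{\tilde X}=\pi^*(K_{X/\Gamma})$, you cannot apply it on either side with your $W$, so the two ``McKay expansions'' you propose to equate are not defined.

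The paper's proof avoids this by never asking for crepancy: it uses the full version of the McKay correspondence from \cite{BLAnnals}, \cite{waelder}, which identifies the orbifold elliptic genus of $(\bar X_i,\Gamma)$ with the \emph{singular} elliptic genus of the quotient $X_i$, defined via an arbitrary resolution with correction factors determined by the discrepancies (the elliptic genus of pairs, Def. \ref{ellgendef} with $E\neq 0$), and then quotes Prop. 3.7 of \cite{sing}, which asserts precisely that this singular elliptic genus is a K-equivalence invariant. The proof of that proposition is essentially your common-resolution comparison, but with the (equal, generally nonzero) discrepancy corrections carried along instead of assumed to vanish. Your remaining steps---the bookkeeping identifying $\{Q\rightarrow P_i\rightarrow P\}$ with $\{Q\rightarrow P\}$, and specializing $u=z$ only at the end---are fine; to repair the argument, replace ``crepant over both'' by ``equal discrepancy divisors over both'' and run the same comparison using the pair/discrepancy version of Theorem \ref{mckay} rather than the weak form.
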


\begin{proof} 
McKay correspondence (cf. theorem \ref{mckay}) asserts that 
the orbifold elliptic genus, in particular yielding the elliptic 
genus of the phase cf. Def. \ref{defellgenphase}),
coincides with the singular elliptic genus of the quotient.
The elliptic genus of the quotient, as the elliptic 
genus of a singular variety is given in terms of resolution
of its singularities. Our assumption that 
phases $\L//_{\kappa_1}G$ and $\L//_{\kappa_2}G$ are related 
by $K$-equivalence imply that both expressions in terms of 
resolution are the same: cf. Prop. 3.7 in \cite{sing}.
 Hence the assertion follows.
\end{proof}
The reset of this section considers explicit forms of the identity 
(\ref{maintheoremformula}) for the examples of phases discussed
by Witten in \cite{n=2}.

\subsection{Projective space}\label{projectivespace}

Consider the  GIT quotient $\CC^n\times \CC/\CC^*$  
as in example \ref{lgmodelsigmamodel} (1), i.e. for the 
action $\lambda (p,x_1,...,x_n)=(\lambda^{-n}p,
\lambda x_1,...,\lambda x_n)$. 
Assume first that $k<0$ for  linearization $\kappa(\lambda)=\lambda^k$.
In this case GIT quotient is the orbifold 
$X_1=\CC^n/\mu_n$ (cf. \ref{lgmodelsigmamodel}). 
The $\CC^*$-action from Prop. \ref{actionexistence}
is the action of 1-dimensional torus $\CC^*/\mu_n$ induced 
on $X_1$ by the diagonal action of $\CC^*$ on $\CC^n$. 
The contribution of the origin 
i.e. the only fixed point of this 
action 
(for the action 
of $\CC^*$ expressed in terms of 
generator $u \in \Char \CC^*/\mu_n)$)  
is given by 
\begin{equation}\label{ordinarylg}
     ({{\theta ({u \over n}-z)} \over {\theta({u \over n})}})^n
\end{equation}
Orbifoldized elliptic genus (\ref{ordinarylg}) 
has form
(cf. (\ref{equivargenus})):
\begin{equation}
   \Ell_{orb}^{\CC^*}(\CC^n,P,\mu_n)= \sum_{a,b}(e^{-2 \pi \ii {b z\over n}}
{{\theta(({u \over n}-z+{{a-b\tau}\over n},\tau)}\\
\over{\theta({u \over n}+{{a-b\tau} \over n},\tau)}})^n
\end{equation}
For $u=z$ we obtain the LG-genus (\ref{formulaberglund}) where $
q_i={1 \over n}$:
\begin{equation}\label{lgequalpowers}
={1 \over n}\sum_{0 \le a,b <n}({{\theta(({{1-n}\over n})z+
{{a-b\tau} \over n})}
\over {\theta({z \over n}+{{a-b \tau} \over n})}}
\ee^{2 \pi \ii {{bz} \over n}})^n
\end{equation}

Another GIT quotient corresponds to linearizations with 
$k>0$ in which case unstable locus is the line $x_1=...=x_N=0$
(cf. \ref{lgmodelsigmamodel}(1)).
It is biholomorphic to the total space $X_2$ of the 
bundle $\O_{\PP^{N-1}}(-N)$.
If $\tilde X_2$ is the blow up of $\CC^n$ at the origin, 
one has $\tilde X_2/\mu_n=X_2$. Let $P$ be the exceptional $\PP^{n-1}$.
Then the equivariant Chern class satisfies: 
$c(TP)=(1+x)^n,c(N_p)=-nx+2\pi \ii u)$. Therefore the contribution of the 
exceptional set $P$, which is also the fixed point set of 
$\CC^*$ action, in the equivariant 
elliptic genus of resolution is given by:
\begin{equation}\label{equivaregprojspace} 
  \Ell^{\CC^*}(X_2,P)=
({x{\theta ({x \over {2 \pi \ii}}-z)}\over {\theta ({x \over 
{2 \pi \ii}})}})^{n}
{{\theta(-{nx \over {2 \pi \ii}}+{u}-z)}
\over {\theta(-{nx \over {2 \pi \ii}}
+{u},\tau)}}
 \end{equation}

\begin{corollary}\label{lgcy}
 (LG-CY correspondence, cf. \cite{GM}) LG elliptic genus 
of singularity $x_1^n+.....+x_n^n$ coincides with the 
elliptic genus of smooth CY hypersurface in $\PP^{n-1}$.
\end{corollary}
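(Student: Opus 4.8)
The plan is to realize Corollary \ref{lgcy} as an instance of the Main Theorem \ref{main}, applied to the two phases of Example \ref{lgmodelsigmamodel} in the equal-weight case $w_i=1$, $D=n$, together with two explicit identifications of the phase genera. With these weights the $k<0$ linearization produces the Landau-Ginzburg phase $\CC^n/\mu_n$, with $\mu_n$ acting by scalars, and the $k>0$ linearization produces the $\sigma$-model phase $\text{Tot}(\O_{\PP^{n-1}}(-n))$; the latter maps onto the former through the contraction $\psi$ of the zero section $\PP^{n-1}$ to the origin. I would take $\psi$ as the birational morphism $X_1\to X_2$ of Prop. \ref{morphismgit}, so that $\kappa_1$ is the $\sigma$-model and $\kappa_2$ the Landau-Ginzburg linearization.

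The first step is to verify the $K$-equivalence hypothesis of Theorem \ref{main}. A generator $\zeta$ of $\mu_n$ acts on $\CC^n$ as the scalar $\zeta$, so its determinant is $\zeta^n=1$; thus $\mu_n\subset SL_n(\CC)$ and $\CC^n/\mu_n$ is Gorenstein with trivial canonical class. On the resolution side, the total space of a line bundle $L$ over $Y$ satisfies $K_{\text{Tot}(L)}=\pi^*(K_Y\otimes L^{-1})$, so for $Y=\PP^{n-1}$ and $L=\O(-n)=K_{\PP^{n-1}}$ one gets $K_{\text{Tot}(\O(-n))}=\O$. Both phases therefore have trivial canonical class and $\psi^*K_{X_2}=K_{X_1}$, i.e. $\psi$ is a $K$-equivalence (equivalently, the origin blow-up of the $\frac1n(1,\dots,1)$ singularity has discrepancy $\frac{\sum w_i}{D}-1=0$ and is crepant). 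Theorem \ref{main} then gives $Ell(\L//_{\kappa_1},\PP^{n-1})=Ell(\L//_{\kappa_2},\{0\})$, the zero section $\PP^{n-1}$ being the only $\CC^*$-fixed component mapping to the origin.

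It remains to match the two sides with the objects in the statement. The Landau-Ginzburg side is immediate: by the Proposition following Def. \ref{berglundgenus}, with the origin as the unique fixed point and lifted weights $q_i=1/n$, the quantity $Ell(\L//_{\kappa_2},\{0\})$ is exactly the LG elliptic genus (\ref{formulaberglund}) of $W=x_1^n+\cdots+x_n^n$, that is (\ref{lgequalpowers}). For the $\sigma$-model side I would take the localized contribution (\ref{equivaregprojspace}) of $P=\PP^{n-1}$ and set $u=z$; writing $\phi(x)=x\,\theta(\tfrac{x}{2\pi\ii}-z)/\theta(\tfrac{x}{2\pi\ii})$ and using $\theta(-w)=-\theta(w)$, the normal factor collapses to $\theta(\tfrac{nx}{2\pi\ii})/\theta(\tfrac{nx}{2\pi\ii}-z)$, while the tangent factor, read off the Euler sequence $0\to\O\to\O(1)^{\oplus n}\to T\PP^{n-1}\to0$, is the genuine elliptic class $\phi(x)^n/\phi(0)$ of $T\PP^{n-1}$. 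On the other hand the normal sequence $0\to TV\to T\PP^{n-1}|_V\to\O(n)|_V\to0$ of the degree-$n$ hypersurface $V$, together with $\int_V=\int_{\PP^{n-1}}(\,\cdot\,)\cdot nh$, writes $Ell(V)$ as $\int_{\PP^{n-1}}\phi(x)^n\phi(0)^{-1}\cdot nh/\phi(nh)$, and $nh/\phi(nh)=\theta(\tfrac{nh}{2\pi\ii})/\theta(\tfrac{nh}{2\pi\ii}-z)$ is precisely the collapsed normal factor. Thus $Ell(\L//_{\kappa_1},\PP^{n-1})|_{u=z}=Ell(V)$, and chaining the three equalities gives the Corollary.

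The main obstacle is this last identification. One must carry the Euler-sequence normalization of $T\PP^{n-1}$ consistently---the constant $\phi(0)=2\pi\ii\,\theta(-z)/\theta'(0)$ coming from the trivial summand of the Euler sequence occurs in both the $\sigma$-model contribution and in $Ell(V)$ and cannot be dropped---and then check that the specialization $u=z$ is exactly what converts the equivariant normal theta-quotient of the zero section into the adjunction factor of the anticanonical hypersurface. The $K$-equivalence input and the LG identification, by contrast, are structural and follow from results already established, so the substance of the Corollary lies in matching (\ref{equivaregprojspace}) at $u=z$ with the hypersurface genus.
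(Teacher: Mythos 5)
Your proposal is correct and is essentially the paper's own argument: the paper likewise reduces the corollary to the equivariant McKay correspondence (Theorem \ref{mckay}) applied to the crepant resolution given by the total space of $\O_{\PP^{n-1}}(-n)$ mapping onto $\CC^n/\mu_n$, specializes to $u=z$, and matches the resulting expression with the elliptic genus of the degree $n$ hypersurface via $c(V_n)=(1+x)^n/(1+nx)$, so your detour through Theorem \ref{main} together with the explicit $K$-equivalence check ($\mu_n\subset SL_n(\CC)$, $K_{\O_{\PP^{n-1}}(-n)}$ trivial) is just the packaged form of the same input. The one point where you go beyond the paper is the normalization constant $\phi(0)=2\pi\ii\,\theta(-z,\tau)/\theta'(0,\tau)$: the paper's formulas (\ref{equivaregprojspace}) and (\ref{ellgencyprojspace}) treat $T\PP^{n-1}$ as if it had $n$ Chern roots and hence omit the factor $\phi(0)^{-1}$ on \emph{both} sides of the comparison---a compensating sloppiness that leaves the corollary intact but which your Euler-sequence bookkeeping correctly repairs.
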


\begin{proof} It follows from equivariant McKay correspondence
(theorem \ref{mckay}) that
\begin{equation}\label{lgcyequivarformula}
  Ell^{\CC^*}_{orb}(\CC^n,\mu_n,\O)=Ell^{\CC^*}(\widetilde{\CC^n/\mu_n},P)
\end{equation}
Hence LG genus (\ref{lgequalpowers}), i.e. the left hand side of 
(\ref{lgcyequivarformula}) for $u=z$ 
is the elliptic genus 
(\ref{equivaregprojspace})  restricted to $u=z$ i.e. can be written as 
\begin{equation}\label{ellgencyprojspace}
({x{\theta ({x \over {2 \pi \ii}}-z)}\over {\theta ({x \over 
{2 \pi \ii}})}})^{n}
{{\theta({nx \over {2 \pi \ii}})}
\over {\theta({nx \over {2 \pi \ii}}
-z,\tau)}}[\PP^{n-1}]
\end{equation}
Since the total Chern class of smooth hypersurface $V_n$ of degree $n$ 
in $\PP^{n-1}$
is $c(V_n)={{(1+x)^n}\over {(1+nx)}}$
($x \in H^2(\PP^{n-1},\ZZ)$
 is the positive generator) 
the expression (\ref{ellgencyprojspace}) conicides with 
the elliptic genus 
of $V_n$.
\end{proof}

\subsection{Weighted $\CC^*$-actions.}\label{weigthedactions}

Now we consider the extension of the previous case, 
to the action with unequal weights considered in 
example \ref{weightedaction}.

\begin{theorem}
Let $T=\CC^*$ acts on $\CC^{n+1}$ 
via (\ref{weightedactioneq})
and let 
\begin{equation}\label{cyweighted}
D=\sum w_i, q_i={w_i \over D}
\end{equation} i.e. the Calabi Yau condition (\ref{cy})  
is satisfied. 
\begin{enumerate}
\item\label{firstpartofprop} The $\CC^*$-action discussed in Prop. \ref{morphismgit} on $\tilde X_1=\CC^n$ which is the lift of 
the $\CC^*$-action on $X_1=\CC^n/\mu_D=\CC^n \times \CC//T$ 
(where the latter induced from the action
of $\CC^*$ on $\CC^n \times \CC$ given by 
$s(p,z_1,....,z_n)=(s \cdot p,z_1,...,z_n)$)
has the form: $s(x_1,....,x_n)=(s^{w_1}x_1,....,s^{w_n}x_n)$.
The action on the second quotient $X_2$, i.e. the line bundle
over $\PP(w_1,...,w_n)$ is the fiberwise action of $\CC^*$.
The contraction of the zero section of the line bundle $X_2$ 
is biholomorphic to $X_1$.
\item\label{localcontributionlg}
 The restriction of local contribution of the origin $\O$ of $X_1$
into equivariant orbifold elliptic genus 
$Ell^{\CC^*}(\CC^n,<J_W>,\O)$ 
obtained by restriction 
on subset  of variables given by $u=z$  
coincides with the LG elliptic genus corresponding to the data
$(G_W,R,W,<J_w>)$ (cf. def. \ref{berglundgenus})
where 
$W$ is a weighted homogeneous polynomial with weights $w_i$ and 
degree $D$,
$G_W$ generated by $(....,exp (2 \pi \ii {w_i \over D},....)$
and $<J_W>$ is cyclic subgroup of $G_W$.       
generated by exponential grading operator (\ref{gradingop}).

\item\label{caselocalconribweighted}
 The local contribution for $\CC^*$-action on  
the total space of the line bundle over $\PP^{n-1}(w_1,...,w_n)$ in 
Example \ref{weightedaction}
is given by 
\begin{equation}\label{localcontribweighted}
\Ell_{orb}(\PP^{n-1},\mu_{w_1}\times \cdots \times \mu_{w_n}) \cdot 
{{\theta({Dx \over {2 \pi \ii}}+u-z)}
\over {\theta({Dx \over {2 \pi \ii}}
-z,\tau)}}[\PP^{n-1}(w_1,....,w_n)]
\end{equation}
where 
$\Ell_{orb}(\PP^{n-1},\mu_{w_1}\times \cdots \times \mu_{w_n})$ is the total 
orbifold elliptic class of $\PP^{n-1}(w_1,...,w_n)$ considered 
as the orbifold quotient of $\PP^{n-1}$. 
\item (LG-CY correspondence) 
If the hypersurface $V_D$ of degree $D$ in $\PP^{n-1}({w_1,..,w_n})$
is quasi-smooth and with assumption (\ref{cyweighted})
the elliptic genus (\ref{berglundgenus}) coincides with the 
orbifold elliptic genus of hypersurface $V_D$.
\end{enumerate}
\end{theorem}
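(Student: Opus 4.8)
The plan is to prove the four parts of this theorem in sequence, since each builds on the structural description set up in part \eqref{firstpartofprop}. First I would establish the geometry of the two phases. For part \eqref{firstpartofprop}, I would apply Proposition \ref{gittheorem} to the weighted action \eqref{weightedactioneq}: the negative linearization gives the unstable locus $p=0$, so by part (2) of that proposition the quotient is $\CC^n/\Ker\psi=\CC^n/\mu_D$, with the residual fiberwise $\CC^*$-action lifting to the weighted action $s(x_1,\dots,x_n)=(s^{w_1}x_1,\dots,s^{w_n}x_n)$ on $\tilde X_1=\CC^n$. The positive linearization removes $x_1=\dots=x_n=0$, giving by part (1) the line bundle $\O(-D)$ over $\PP(w_1,\dots,w_n)$, with $\CC^*$ acting along the fibers. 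The birational morphism $\psi\colon X_2\to X_1$ contracting the zero section is then the statement that blowing down the weighted projective space recovers the affine cone $\CC^n/\mu_D$; I would verify this identification directly from the toric data, noting it is a weighted analogue of the $\CC^n/\mu_n$ versus $\O_{\PP^{n-1}}(-n)$ picture of Example \ref{lgmodelsigmamodel}.

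For part \eqref{localcontributionlg}, the strategy is to compute the localized orbifold elliptic class \eqref{equivargenus} at the unique fixed point, the origin $\O\in X_1$, where the tangent space splits into weight-$q_i u$ eigenbundles under the lifted $\CC^*$-action and the $\mu_D$-action multiplies the $i$-th coordinate by $e^{2\pi\ii q_i}$. Since $P=\{\O\}$ is a point, the Chern roots $x_\lambda$ vanish, and the sum over commuting pairs $(g,h)\in\mu_D\times\mu_D$ collapses \eqref{equivargenus} into a product of $\theta$-quotients in the fixed-point weights. Specializing $u=z$ as in Definition \ref{defellgenphase}, I would match this term by term against the defining formula \eqref{formulaberglund}--\eqref{formulaberglund} for $Z[R,H]$ with $H=\langle J_W\rangle$, identifying $\theta_i(g)=q_i\lambda(g)$ and checking the exponential prefactors agree; this is essentially the equivariant refinement already carried out in the proof of the modularity theorem, so it reduces to bookkeeping of the character logarithms.

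For part \eqref{caselocalconribweighted}, I would compute the contribution of the fixed locus $P=\PP(w_1,\dots,w_n)$ (the zero section) to the equivariant elliptic genus of $X_2$ via the localization formula \eqref{localization}. The tangent bundle splits as $T\PP(w_1,\dots,w_n)\oplus N$, where the normal bundle $N=\O(-D)$ carries the full fiberwise $\CC^*$-weight; the tangential factor assembles into the total orbifold elliptic class $\Ell_{orb}(\PP^{n-1},\mu_{w_1}\times\cdots\times\mu_{w_n})$ by treating weighted projective space as the global quotient of $\PP^{n-1}$, while the normal factor yields the explicit $\theta$-quotient in \eqref{localcontribweighted} with equivariant Chern class $-Dx+u$. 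The main technical obstacle here is the orbifold bookkeeping for $\PP(w_1,\dots,w_n)$: one must correctly account for the stabilizer groups $\mu_{w_i}$ along the coordinate strata and verify that their contributions reassemble into the claimed total orbifold class rather than producing spurious extra fixed-locus terms.

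Finally, part (4) follows by combining the previous parts with the McKay-type invariance of Theorem \ref{main}. Because the Calabi--Yau condition \eqref{cyweighted}, $D=\sum w_i$, forces $K$-equivalence of the two phases (equivalently, triviality of $c_1$ on the relevant resolution as in Theorem \ref{modularitytheorem}(a)), Theorem \ref{main} gives $\sum_{P_i}Ell(\L//_{\kappa_1},P_i)=Ell(\L//_{\kappa_2},P)$. The left side is the LG genus \eqref{formulaberglund} by part \eqref{localcontributionlg}, and the right side, restricted to $u=z$, is identified with the orbifold elliptic genus of the quasi-smooth hypersurface $V_D$ by specializing \eqref{localcontribweighted}, exactly as in the unweighted computation of Corollary \ref{lgcy} where the normal-bundle $\theta$-quotient reproduces the adjunction factor $c(V_D)=(1+x)^n\cdots/(1+Dx)$. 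The hard part throughout will be part \eqref{caselocalconribweighted}: ensuring the weighted-projective orbifold structure is handled consistently so that the quasi-smoothness of $V_D$ is precisely what makes the normal-bundle contribution specialize to the hypersurface elliptic genus.
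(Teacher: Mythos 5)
Your proposal is correct and follows essentially the same route as the paper's proof: part (1) from the GIT description, part (2) by localizing the orbifold elliptic class at the origin with infinitesimal characters $q_iu$ and matching the $u=z$ specialization against the Berglund--Henningson formula, part (3) by computing on the partial resolution $X_2$ with the zero section $\PP(w_1,\dots,w_n)$ treated as the global quotient of $\PP^{n-1}$ by $\mu_{w_1}\times\cdots\times\mu_{w_n}$ (orbifold class times the normal-bundle theta factor), and part (4) via the equivariant McKay correspondence/$K$-equivalence as in Corollary \ref{lgcy}. The paper's own argument is terser (citing \cite{BLAnnals}, \cite{waelder} for part (4) rather than invoking Theorem \ref{main} explicitly), but the underlying mechanism is identical.
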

\begin{proof} Part (\ref{firstpartofprop}) follows from definitions.
Next consider the local contribution described in 
(\ref{localcontributionlg}). 
The map of groups of characters $j^*: Char \CC^*/\mu_D \rightarrow Char 
\CC^*$ satisfies:
$j^*({u})=Dv$. 
To determine the contribution of the origin
$\O$, which is the only fixed component of the 
$\CC^*$-action, in $Ell^{\CC^*}_{orb}(\CC^n/\mu_n)$ 
note that that 
the infinitesimal characters of action of $\CC^*$  
are $uw_i$.  
Hence the local contribution of $\O$ in terms of generator $u$ 
of  $\Char \CC^*/\mu_d$
is given as follows (where $\lambda_i(g)$ as in (\ref{ellgendef})):
\begin{equation}\label{annalsspecial} 
{ 1 \over {D}}\sum_{g,h \in \mu_D}\Pi_{i=1}^{i=n} 
{{\theta({{u w_i} \over D}-\lambda_i(g)-\lambda_i(h) \tau-z)}
\over {\theta({{uw_i} \over D}-\lambda_i(g)-\lambda_i(h) \tau)}}
\ee^{2 \pi \ii \lambda_i(h)z}
\end{equation}
$$
={ 1 \over {\vert D \vert}}\sum_{0 \le a,b <D}\Pi_{i=1}^{i=n} {{\theta({uq_i}+
{{a-b\tau} \over D}-z)}
\over {\theta({u q_i}+{{a-b \tau} \over D})}}
\ee^{2 \pi \ii {{bz} \over D}}
$$
Specializing this to the case $u=z$ yields the expression
\footnote{
(\ref{annalsspecial})
is the Jacobi form of weight zero and index $n-2$
due to equality $\sum q_i=1$ cf. Theorem \ref{modularitytheorem}.}
$$
={ 1 \over {\vert D \vert}}\sum_{0 \le a,b <D}\Pi_{i=1}^{i=n} {{\theta(({q_i}-1)z+
{{a-b\tau} \over D})}
\over {\theta({zq_i}+{{a-b \tau} \over D})}}
\ee^{2 \pi \ii {{bz} \over D}}
$$
identical to (\ref{formulaberglund})
\footnote{in terminology of \cite{berglund},
$\mu_D$ is the group of phase symmetries.}

In the case (\ref{caselocalconribweighted}) the argument is 
similar to the used to derive (\ref{equivaregprojspace})
but we use partial resolution in which the the only component of 
the exceptional 
set is the orbifold (i.e. $\PP^{n-1}(w_1,....w_n)$). The formula
for the contirbution follows from presentation of 
this orbiofld as global quotient which results in 
replacing the total elliptic class by the orbifold elliptic class.

The last statement follows from results of 
\cite{BLAnnals},\cite{waelder} due 
to relation between the orbifold elliptic class 
and the elliptic class of resolutions.
\end{proof}

\subsection{Hybrid models} Consider now elliptic genus 
for the phase in example \ref{hybridexample}

\begin{theorem} 
The local contributions of the fixed point set of $\V_1$ which 
is the zero section of $\V_1$ 
for the above action of $T$ (i.e. the elliptic genus of phase $H_1$)
is given by:
\begin{equation}\label{hybridellgen}
      {1 \over n} \sum_{0 \le a,b<n} 
{{\theta(-m{x \over {2 \pi i }}+({1 \over n}-1)z+{{a-b\tau}\over n},\tau)}
\over{\theta(-m{x \over {2 \pi i}}+{1 \over n}z+{{a-b\tau}\over n}
,\tau)}}e^{{2 \pi i bz}\over n})^n
({x{\theta({x \over {2 \pi i}}-z)} \over {\theta({x \over {2 \pi i }})}})^m[\PP^{m-1}]
\end{equation}
(where $x=c_1(\O_{\PP^{m-1}}(1))$).
Similarly the elliptic genus of phase $H_2$ 
is given by 
\begin{equation}
      {1 \over m} \sum_{0 \le a,b<m} 
{{\theta(-{nx \over {2 \pi i }}+({1 \over m}-1)z+{{a-b\tau}\over m},\tau)}
\over{\theta(-{nx \over {2 \pi i}}+{1 \over m}z+{{a-b\tau}\over m}
,\tau)}}e^{{2 \pi i bz}\over m})^m
({x{\theta({x \over {2 \pi i}}-z)} \over {\theta({x \over {2 \pi i }})}})^n[\PP^{n-1}]
\end{equation}
Both these contributions are equal to the contribution of 
the phase $H_3$ (``hybrid-CY correspondence''). 
All 3 contribution (up to a factor) coincide with the 
elliptic genus of CY hypersurface in $\PP^{n-1}\times \PP^{m-1}$
(of bi-degree $(n,m)$).
\end{theorem}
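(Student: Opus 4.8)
The plan is to handle the three chambers $H_1,H_2,H_3$ of Example \ref{hybridexample} separately and then glue them by the Main Theorem. First I would make the orbifold geometry of $H_1$ explicit: here $\widetilde{\L//_{\kappa_1}G}=\V_1=\bigoplus_{j=1}^n\O_{\PP^{m-1}}(-m)$, the orbifoldization group is the diagonal $\Gamma=\mu_n\subset(\CC^*)^n$ whose generator acts by $\ee^{2\pi\ii/n}$ on each of the $n$ fibre summands, and $T=\CC^*$ acts diagonally on the fibres with normalized infinitesimal character $q=1/n$ (exactly as in the Landau--Ginzburg fibre $\CC^n/\mu_n$ of Example \ref{weightedaction} with all weights $1$ and degree $n$). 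The fixed locus of $T$ is the zero section $P=\PP^{m-1}$, and $T\V_1\vert_P=T\PP^{m-1}\oplus\bigoplus_{j=1}^n\O_{\PP^{m-1}}(-m)$.

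Then I would substitute into the localized orbifold formula (\ref{ellgenphase})--(\ref{equivargenus}) and restrict to $u=z$ as in Def. \ref{defellgenphase}. The base directions carry trivial $T$- and $\Gamma$-action, so via $T\PP^{m-1}\oplus\O=\O(1)^{\oplus m}$ they produce, up to the normalizing constant from the trivial summand, the class $\bigl(\tfrac{x\,\theta(\tfrac{x}{2\pi\ii}-z)}{\theta(\tfrac{x}{2\pi\ii})}\bigr)^m$. Each of the $n$ fibre directions has Chern root $-mx$ (since $c_1(\O(-m))=-mx$), weight $q=1/n$, and $\Gamma$-logarithms $\lambda(g)=a/n,\ \lambda(h)=b/n$, so by the factor $\Phi$ of Theorem \ref{modularitytheorem} each contributes $\tfrac{\theta(-\tfrac{mx}{2\pi\ii}+(\tfrac1n-1)z+\tfrac{a-b\tau}{n})}{\theta(-\tfrac{mx}{2\pi\ii}+\tfrac1n z+\tfrac{a-b\tau}{n})}\ee^{2\pi\ii bz/n}$; raising this single fibre factor to the $n$th power inside the sum $\tfrac1n\sum_{0\le a,b<n}$ and multiplying by the base class yields (\ref{hybridellgen}). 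The formula for $H_2$ is obtained verbatim after interchanging $n\leftrightarrow m$ and $\V_1\leftrightarrow\V_2$.

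Next I would establish the equalities among the three contributions. The three chambers are phases of one GIT datum on $\CC\times\CC^{n+m}$ with $\psi=\lambda_1^{-n}\lambda_2^{-m}$, and the weight sums ($n$ for the $x_i$, $m$ for the $y_j$) match the exponents in $\psi$, which is precisely the Calabi--Yau condition (\ref{cy}); this guarantees that the wall-crossings between the chambers are $K$-equivalences. Hence the Main Theorem \ref{main} (equivariant McKay correspondence, Theorem \ref{mckay}) applies and gives $\sum_{P_i}Ell(\L//_{\kappa_1},P_i)=Ell(\L//_{\kappa_3},P)=\sum_{P_j}Ell(\L//_{\kappa_2},P_j)$, i.e. the hybrid--CY correspondence. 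To identify the common value with the elliptic genus of the bidegree-$(n,m)$ hypersurface I would compute the $H_3$ contribution directly, exactly as in Corollary \ref{lgcy}: $H_3$ is the total space of $K_S=\O(-n,-m)$ over $S=\PP^{n-1}\times\PP^{m-1}$, its zero section $S$ is the $\CC^*$-fixed locus, and localization restricted to $u=z$ gives $\int_S\bigl(\tfrac{x_1\theta(\tfrac{x_1}{2\pi\ii}-z)}{\theta(\tfrac{x_1}{2\pi\ii})}\bigr)^n\bigl(\tfrac{x_2\theta(\tfrac{x_2}{2\pi\ii}-z)}{\theta(\tfrac{x_2}{2\pi\ii})}\bigr)^m\tfrac{\theta(\tfrac{nx_1+mx_2}{2\pi\ii})}{\theta(\tfrac{nx_1+mx_2}{2\pi\ii}-z)}$. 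Since adjunction gives $c(V_{(n,m)})=\tfrac{(1+x_1)^n(1+x_2)^m}{1+nx_1+mx_2}$ and $\int_{V}=\int_S(nx_1+mx_2)\cup(-)$, this integral is exactly the elliptic genus of $V_{(n,m)}$, up to the standard normalization factor; one uses $\theta(-w)=-\theta(w)$ to bring the fibre theta-quotient into the displayed form.

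The hard part will be bookkeeping rather than a single idea: one must track the normalized characters $q_i$, the $\Gamma$-logarithms $\lambda(g),\lambda(h)$, and the twisted Chern roots $-mx$ correctly through (\ref{equivargenus}) so that the orbifold and equivariant data assemble into precisely (\ref{hybridellgen}). The other delicate point is verifying that the weight sums genuinely yield the $K$-equivalence hypothesis of Theorem \ref{main} at each wall, so that McKay correspondence applies with vanishing discrepancy (pair) corrections. The concluding theta-function identification with $Ell(V_{(n,m)})$ is the two-variable analogue of Corollary \ref{lgcy} and goes through by the same manipulation.
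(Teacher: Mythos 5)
Your proposal is correct and follows essentially the same route as the paper: identify $H_1$ as the global quotient $\V_1/\mu_n$ with $\CC^*$ acting diagonally on the fibers, localize the orbifold elliptic class (\ref{equivargenus}) on the zero section $\PP^{m-1}$ using the equivariant splitting $T\V_1\vert_P=T\PP^{m-1}\oplus\O(-m)^{\oplus n}$ with fiber weights $q=1/n$ and $\Gamma$-logarithms $a/n,b/n$, restrict to $u=z$, and then obtain the hybrid/CY equality from the $K$-equivalence (the blow-up of $\V_1/\mu_n$ along its zero section is exactly the $H_3$ phase) via Theorem \ref{main}. Your explicit $H_3$ localization and adjunction computation identifying the result with $Ell(V_{(n,m)})$, in the style of Corollary \ref{lgcy}, merely fills in a step the paper leaves implicit, and your "up to the standard normalization factor" caveat matches the paper's "up to a factor."
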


\begin{proof} The GIT quotient in the case ($H_1$) is the global quotient 
by the group $\Gamma=\mu_n$ given by 
$\V_1/\mu_n=[\oplus \O_{\PP^{m-1}}(-m)]/\mu_n$ (where $[F]$ denotes 
the total space of the bundle  $F$). $\CC^*$-action is the diagonal action 
on the fibers of $\V_1$ i.e. the fixed point set is $\PP^{m-1}$. 
The equivariant Chern class of the restriction of the tangent bundle to $\V_1$ 
on $\PP^{m-1}$ which is the zero section is 
$c(\V_1,P)=(-mx+{u \over n})^n(1+x)^m$. The eigenbundles of action 
of $\Gamma$ are the same as for $\CC^*$ action so that $\lambda(g)$
for $g=\exp({{2\pi i a}\over n})$ is ${a \over n}$.
Hence the formula (\ref{equivargenus}) for equivariant elliptic genus 
reduces to  
\begin{equation}\label{hybridellgenequivar}
      {1 \over n} \sum_{0 \le a,b<n} 
{{\theta(-m{x \over {2 \pi i }}+{1 \over n}u-z+{{a-b\tau}\over n},\tau)}
\over{\theta(-m{x \over {2 \pi i}}+{1 \over n}u+{{a-b\tau}\over n}
,\tau)}}e^{{2 \pi i bz}\over n})^n
({x{\theta({x \over {2 \pi i}}-z)} \over {\theta({x \over {2 \pi i }})}})^m[\PP^{m-1}]
\end{equation}
and (\ref{hybridellgen}) follows.
The case of ($H_2$) phase is identical. The last assertion 
on equality of \ref{hybridellgenequivar} and the elliptic 
genus of CY hypersurface in $\PP^{m-1}\times \PP^{n-1}$ follows since 
blow up of $\V_1/\mu_n$ is a K-equivalence.
\end{proof}

This example can be easily extended to the case of weighted actions
of $(\CC^*)^2$ on $\CC^n\times\CC^m\times \CC$ given by:
\begin{equation}
(\lambda_1,\lambda_2)(p,x_1,\dots,x_n,y_1,\dots,y_m)=
\end{equation}
$$(\lambda_1^{-D_1}\lambda_2^{-D_2}p,\lambda_1^{w_1^1} x_1,\dots,
\lambda_1^{w^1_n} x_n,\lambda_2^{w^2_1}y_1,
\dots,\lambda_2^{w^2_m}y_m)
$$
One has three phases, one of which is a line bundle over product 
of weighted projective spaces $\PP(w^1_1,\cdots,w^1_n) \times 
\PP(w^2_1,\cdots,w^2_m)$ and others are orbi-bundles over 
each of the factors in this product.
The first two phases are the global quotients.
One of the orbi-bundles is
the quotient of 
$\V=(\O_{\PP^{m-1}}(-D_1))^n$ by the 
orbifold group $\Gamma=\mu_{D_2}\times (\mu_{w_1^1} \times \cdots \times 
\mu_{w_n^1})$  with $\mu_{D_1}$ acting
by multiplying $i$-th summand by $\exp({{2 \pi i w_i^1}\over {D_1}})$ and 
the group $\mu_{w_1^1} \times \cdots \times 
\mu_{w_n^1}$ acts coordinate-wise on $\PP^{n-1}$ so that the quotient 
is $\PP(w^1_1,\cdots,w^1_n)$.
The elliptic genus of this phase is 
given by the formula which is a modification 
of expression (\ref{hybridellgen}) 
as follows. The group $\mu_n$ is replaced by 
the group generated by exponential 
grading operator and factor $({x{\theta({x \over {2 \pi i}}-z)} \over {\theta({x \over {2 \pi i }})}})^m$ by the cohomology class 
which is the class of the orbifold 
$\Ell(\PP^{m-1},\mu_{w_1^2}\cdots,\mu_{w_m^2})$ (cf. \cite{sing}).

\subsection{Gauged Landau Ginzburg models.} Consider now 
the elliptic 
genera in the cases discussed
in Example \ref{gaugedlg}. In  
Example \ref{gaugedlg} (1), 
the GIT quotient is the global quotient by group $\Gamma=\mu_2$ 
acting on a singular space which is the quadratic cone 
in $\CC^4$. It can be resolve either via 
a small resolution or with exceptional set $\PP^1 \times \PP^1$.
If $X$ is either of these resolutions, then 
$Ell^{\CC^*}_{orb}(X,\Gamma,z,z,\tau)$  
yields the elliptic genus of this gauged LG model i.e. 
we obtain several expression for the elliptic genus of such phase. 

Similarly in the case of Example \ref{gaugedlg} (3) 
the elliptic genus of this GIT quotient 
is the equivariant orbifold elliptic (specialized to $u=z$) 
of the resolution of the affine cone over the 
Grassmanian with the orbifold group $\mu_n$. 

\section{Concluding remarks.}

\begin{remark}{\it Elliptic genus of phases when CY condition fails.}
The assumptions of the theorem \ref{main} can be weakened. 
Firstly, the condition that $\psi$ is a K-equivalence can be
eliminated by using equivariant elliptic genus of pairs 
described in Definition \ref{ellgendef} and relating elliptic 
genus of the phase $\L//_{\kappa_2}$ to elliptic genus of 
a pair $(\L//_{\kappa_1},E)$. 

Secondly, assumption of Prop. \ref{morphismgit} that $X_1,X_2$ 
can be replace by requiring that $\bar X_1,\bar X_2$ are
Kawamata log-terminal. 

Finally one can extend Prop. \ref{morphismgit} to Kawatmata log-terminal 
pairs and show \ref{main} 
replacing phases $\L//_{\kappa_i}$ by
pairs $(\L//_{\kappa_i},E_i)$  where $E_i$ are $\CC^*$ invariant divisors
and the latter pairs are klt.
\end{remark}

\begin{remark}{\it Mirror symmetry for hybrid and gauged LG models.}
Existence of LG/CY correspondence suggests that mirror symmetry between
Calabi Yau manifolds for which LG/CY correspondence defined, 
should correspond to mirror symmetry between LG models.
Such construction was proposed in \cite{BeH} and was studied
in detail more recently in \cite{Kra},\cite{ChRBergl},\cite{borisovbergl}.
Correspondence described here between other classes of phases
suggests that there should be a mirror correspondence
between certain hybrid models and more general phases discussed
above. It would be interesting to have such mirror correspondence 
between hybrid models explicitly 
extending Begrlund-Hubsch mirror symmetry for weighted homogeneous polynomials. 
\end{remark}

\begin{remark}{\it Limit $q \rightarrow 0$}

Such specialization yields Hodge theoretical data for sigma-models
(i..e the Hirzebruch's $\chi_y$ genus, cf. \cite{sing}) 
and for Landau-Ginzburg models where it involve 
information about the spectrum of weighted homogeneous singularity 
(cf.\cite{berglund},
and section \ref{weigthedactions} above). It would be interesting 
to have Hodge theoretical interpretation of such limit 
for hybrid and gauged LG models
as well.
\end{remark}

\begin{remark}{\it Possible generalizations} 
In recent paper \cite{borisovkaufmann} the authors
considered generalizations of LG-CY correspondence to (0,2)-models on 
the level of vertex algebras (case of ordinary LG/CY on the 
level of vertex algebras 
in the homogeneous case was considered in \cite{GM}).
It would be interesting to find ``vertex algebras of phases''
extending results of these works.
\end{remark}

\begin{remark} As was mentioned (cf. remark \ref{liealgebrafunction}), 
equivariant orbifold elliptic genus 
is a holomorphic function 
on $\CC \times Lie(T)^*\otimes \CC \times \H$ while the elliptic genus 
of a phase, 
which in Calabi Yau case is a Jacobi form on $\CC\times \H$,
is obtained by restricting this function 
on the line $u_i=z, i=1,...,\dim T$ 
in $\CC \times Lie(T)^*\otimes \CC$ 
(though we were concerned
only with the case $dim T=1$ one can extend construction
of this paper to the case $dim T>1$ as well). It would be interesting 
to describe modular properties of these functions
on $\CC \times Lie(T)^*\otimes \CC \times \H$ and characterize 
mentioned restriction in modular terms.
\end{remark}


\begin{thebibliography}{99}

\bibitem{aharony} 
O.Aharony, P.Di Francesco, P, S.Yankielowicz, S.
Elliptic genera and the Landau-Ginzburg approach to N=2 orbifolds. 
Nuclear Phys. B 411 (1994), no. 2-3, 584-608.

\bibitem{ando} M.Ando, E.Sharpe, 
Elliptic genera of Landau-Ginzburg models over nontrivial spaces. 
Adv. Theor. Math. Phys. 16 (2012), no. 4, 1087-1144. 

\bibitem{BH} A.Borel, F.Hirzebruch, Characteristic classes and homogeneous spaces. I. Amer. J. Math. 80 1958 458-538. 

\bibitem{katzarkov} M.Ballard, D.Favero, L.Katzarkov, Variation of geometric invariant theory quotients and derived categories, arXiv:1203.6643.

\bibitem{BeH} P.Berglund,T.H\"ubsch, A generalized construction of mirror manifolds. Nuclear Phys. B 393 (1993), no. 1-2, 377-391. 

\bibitem{berglund} P.Berglund, M.Henningson, Landau-Ginzburg orbifolds, mirror symmetry and the elliptic genus. Nuclear Phys. B 433 (1995), 
no. 2, 311-332.

\bibitem{borisovkaufmann} L.Borisov, R.Kaufmann, 
On CY-LG correspondence for (0,2) toric models,  arXiv:1102.5444.

\bibitem{borisovbergl} L.Borisov, 
Berglund-Hübsch mirror symmetry via vertex algebras,
 arXiv:1007.2633

\bibitem{invent} L.Borisov, A.Libgober, Elliptic genera of toric varieties and applications to mirror symmetry. Invent. Math. 140 (2000), no. 2, 453-485.

\bibitem{sing} L.Borisov, A.Libgober,
 Elliptic genera of singular varieties. 
Duke Math. J. 116 (2003), no. 2, 319-351. 

\bibitem{BLAnnals} 
 L.Borisov,A.Libgober,
 McKay correspondence for elliptic genera. Ann. of Math. (2) 161 (2005), no. 3, 1521-1569.

\bibitem{chandr} 
K.Chandrasekharan, Elliptic functions. 
Grundlehren der Mathematischen Wissenschaften 
281. Springer-Verlag, Berlin, 1985.

\bibitem{ChRBergl}
A.Chiodo, Y.Ruan, 
LG/CY correspondence: the state space isomorphism. (English summary) 
Adv. Math. 227 (2011), no. 6, 2157-2188.


\bibitem{chiodo} 
A.Chiodo, H.Iritani, Y.Ruan, Landau-Ginzburg/Calabi-Yau correspondence, 
global mirror symmetry and Orlov equivalence. Publ. Math. Inst. Hautes Études Sci. 119 (2014), 127-216.

\bibitem{cox} D.Cox, Recent developments in toric geometry. 
Algebraic geometry—Santa Cruz 1995, 389-436, 
Proc. Sympos. Pure Math., 62, Part 2, Amer. Math. Soc., Providence, RI, 1997. 


\bibitem{CLS} D.Cox, J.Little, H.Schenck,
 Toric varieties. Graduate Studies in Mathematics, 124. American Mathematical Society, Providence, RI, 2011. 

\bibitem{dolhu}
I.Dolgachev, Y.Hu, 
Variation of geometric invariant theory quotients. 
With an appendix by Nicolas Ressayre. 
Inst. Hautes Études Sci. Publ. Math. No. 87 (1998), 5-56. 

\bibitem{dol} I.Dolgachev,
 Lectures on invariant theory. London Mathematical Society Lecture Note Series, 296. Cambridge University Press, Cambridge, 2003. 

\bibitem{ebeling} 
W.Ebeling, A.Takahashi, Variance of the exponents of orbifold Landau-Ginzburg models. Math. Res. Lett. 20 (2013), no. 1, 51-65

\bibitem{EG} D.Edidin, W.Graham, Riemann-Roch for equivariant 
Chow groups. Duke Math. J. 102 (2000), no. 3, 567-594.

\bibitem{geras} 
A.Geraschenko, M.Satriano,
Torus Quotients as Global Quotients by Finite Groups,arXiv:1201.4807.

\bibitem{GM} V.Gorbounov, F.Malikov, Vertex algebras and the Landau-Ginzburg/Calabi-Yau correspondence. Mosc. Math. J. 4 (2004), no. 3, 729-779,

\bibitem{gorboch} V.Gorbounov, S.Ochanine, Mirror symmetry formulae 
for the elliptic genus of complete intersections. 
J. Topol. 1 (2008), no. 2, 429-445.  

\bibitem{halpern} 
D.Halpern-Leistner, 
Geometric invariant theory and derived categories of coherent sheaves. 
Thesis–University of California, Berkeley. 2013. 

\bibitem{katzarkovpr} L.Katzarkov, V.Przyjalkowski, 
Landau-Ginzburg models—old and new. 
Proceedings of the Gokova Geometry-Topology Conference 
2011, 97-124, Int. Press, Somerville, MA, 2012.

\bibitem{katzarkov1} L.Katzarkov, M.Kontsevich, T.Pantev,
Tian-Todorov theorems for Landau-Ginzburg models, arXiv:1409.5996 .

\bibitem{kawai}  T.Kawai, Y.Yamada, Yang Sung-Kil, 
Elliptic genera and N=2 superconformal field theory. 
Nuclear Phys. B 414 (1994), no. 1-2, 191-212.


\bibitem{kirwanpartial} 
F.Kirwan, Partial desingularisations of quotients of nonsingular varieties and their Betti numbers. Ann. of Math. (2) 122 (1985), no. 1, 41-85.

\bibitem{KKV} F.Knop, H.Kraft,T.Vust, 
The Picard group of a G-variety. Algebraische Transformationsgruppen und Invariantentheorie, 77-87, DMV Sem., 13, Birkhäuser, Basel, 1989.

\bibitem{Kra}
M.Krawitz, FJRW rings and Landau-Ginzburg mirror symmetry. 
Thesis. University of Michigan. 2010. 


\bibitem{landweber} 
Elliptic curves and modular forms in algebraic topology. 
Proceedings of a conference held in Princeton, New Jersey, September 15-17, 1986. Edited by P. S. Landweber. Lecture Notes in Mathematics, 1326. Springer-Verlag, Berlin, 1988. 


\bibitem{losik} 
M.Losik, P.Michor, V.Popov,
 Invariant tensor fields and orbit varieties for finite algebraic transformation groups. A tribute to C. S. Seshadri (Chennai, 2002), 346-378, Trends Math., Birkhäuser, Basel, 2003.

\bibitem{MFK} 
D.Mumford, J.Fogarty,F.Kirwan, Geometric invariant theory. 
Third edition. Ergebnisse der Mathematik und ihrer Grenzgebiete (2)
34. Springer-Verlag, Berlin, 1994


\bibitem{orlov} D.Orlov, Derived categories of coherent sheaves and triangulated categories of singularities. Algebra, arithmetic, and geometry: in honor of Yu. I. Manin. Vol. II, 503-531, Progr. Math., 270, Birkhäuser Boston, Inc., Boston, MA, 2009.

\bibitem{steenbrink}
J.Steenbrink, Mixed Hodge structure on the vanishing cohomology. Real and complex singularities (Proc. Ninth Nordic Summer School/NAVF Sympos. Math., Oslo, 1976), pp. 525-563. Sijthoff and Noordhoff, Alphen aan den Rijn, 1977.

\bibitem{thad} 
M.Thaddeus, Geometric invariant theory and flips. J. Amer. Math. Soc. 9 (1996), no. 3, 691-723. 

\bibitem{waelder} 
R.Waelder, Equivariant elliptic genera and local McKay correspondences. Asian J. Math. 12 (2008), no. 2, 251-284.

\bibitem{wittenlg} E.Witten, On the Landau-Ginzburg description 
of N=2 minimal models, 
Int. J. Mod. Phys. A 9 (1994), 4783-4800, arXiv:hep-th/9304026. 

\bibitem{n=2} 
E.Witten, Phases of N=2 theories in two dimensions. Mirror symmetry, II, 143-211, AMS/IP Stud. Adv. Math., 1, Amer. Math. Soc., Providence, RI, 1997

\end{thebibliography}
\end{document}